\definecolor{darkred}{RGB}{139,0,0}
\definecolor{darkblue}{RGB}{0,0,139}
\definecolor{darkgreen}{RGB}{0,100,0}
\let\@wraptoccontribs\wraptoccontribs
\newtheorem{thm}{Theorem}[section]
\newtheorem{cor}[thm]{Corollary}
\newtheorem{lem}[thm]{Lemma}
\theoremstyle{definition}
\newtheorem{defn}[thm]{Definition}
\theoremstyle{remark}
\newtheorem{rem}[thm]{Remark}
\newtheorem{example}[thm]{Example}
\numberwithin{equation}{section}
\newcommand{\bK}{\mathbb{K}}
\newcommand{\bL}{\mathbb{L}}
\newcommand{\bN}{\mathbb{N}}
\newcommand{\bQ}{\mathbb{Q}}
\newcommand{\bZ}{\mathbb{Z}}
\newcommand{\gA}{\bold{A}}
\newcommand{\gC}{\bold{C}}
\newcommand{\gE}{\bold{E}}
\newcommand{\gI}{\bold{I}}
\newcommand{\gM}{\bold{M}}
\newcommand{\gR}{\bold{R}}
\newcommand{\cA}{\mathcal{A}}
\newcommand{\cO}{\mathcal{O}}
\newcommand{\fS}{\mathfrak{S}}
\newcommand\lra{\longrightarrow}
\newcommand\GL{\mathrm{GL}}
\newcommand\RBS{\mathrm{RBS}}
\newcommand\rk{\mathrm{rk}}
\newcommand\Tor{\mathrm{Tor}}
\newcommand{\bk}{\mathbbm{k}}
\title[Stability for general linear groups over Dedekind domains]{Homological stability for\\general linear groups over Dedekind domains}
\author{Oscar Randal-Williams}
\email{o.randal-williams@dpmms.cam.ac.uk}
\address{Centre for Mathematical Sciences\\
Wilberforce Road\\
Cambridge CB3 0WB\\
UK}
\begin{document}

\begin{abstract}
We prove a new kind of homological stability theorem for automorphism groups of finitely-generated projective modules over Dedekind domains, which takes into account all possible stabilisation maps between these, rather than only stabilisation by the free module of rank 1. We show the same kind of stability holds for Clausen and Jansen's reductive Borel--Serre spaces.
\end{abstract}

\maketitle

\section{Introduction}

The title of this note requires explanation, for Charney \cite{Charney} has already shown that general linear groups over Dedekind domains enjoy homological stability. Our goal is to show how these results may be obtained, and improved, by using the machinery of cellular $E_k$-algebras which we developed together with Galatius and Kupers \cite{e2cellsI}. More precisely, in Section 18.2 of that paper it was explained how the case of Dedekind domains of class number 1 (i.e.\ PID's) may be treated, and here we analyse the general case. Rather than completeness, our motivation is that a new \emph{kind} of homological stability theorem is appropriate when the class number is $> 1$: we will establish a generic homological stability theorem of this kind. We will also show that it applies to Clausen and Jansen's reductive Borel--Serre spaces.

\subsection{Modules over a Dedekind domain}

Let $\cO$ be a Dedekind domain, and $\mathrm{Pic}(\cO)$ denote the abelian group of isomorphism classes of $\otimes$-invertible $\cO$-modules (which are the rank 1 projective modules; this group may be identified with the ideal class group of $\cO$). The classification of finitely-generated modules over a Dedekind domain has the following consequence: any finitely-generated projective $\cO$-module has the form $M \cong \cO^{n-1} \oplus L$ for some $[L] \in \mathrm{Pic}(\cO)$. In other words finitely-generated projective $\cO$-modules $M$ are classified up to isomorphism by the data of
\begin{enumerate}[(i)]
\item their rank $n = \mathrm{rk}(M)$, and 
\item the isomorphism class $[\Lambda^n_\cO M] \in \mathrm{Pic}(\cO)$ of their top exterior power.
\end{enumerate}

\subsection{Earlier results}

Unless otherwise mentioned, we consider integral homology. For an $\cO$-module $M$, let $\GL(M)$ denote the group of $\cO$-module automorphisms of $M$. Charney's theorem states that for finitely-generated projective $\cO$-modules $M$ and $N$, the homomorphism $- \oplus \mathrm{Id}_N : \GL(M) \to \GL(M \oplus N)$ induces 
\begin{equation}\label{eq:GeneralStab}
(- \oplus \mathrm{Id}_N)_* : H_d(\GL(M) ) \lra H_d(\GL(M \oplus N))
\end{equation}
which is an isomorphism for $d \leq \tfrac{\mathrm{rk}(M)-5}{4}$ and an epimorphism for $d \leq \tfrac{\mathrm{rk}(M)-1}{4}$ (with an improvement if $\cO$ is a PID). Later, van der Kallen \cite[Theorem 4.11]{vdK} has shown that 
$$(- \oplus \mathrm{Id}_\cO)_* : H_d(\GL(\cO^{n}) ) \lra H_d(\GL(\cO^{n+1}))$$
is an isomorphism for $d \leq \tfrac{n-1}{2}$ and an epimorphism for $d \leq \tfrac{n}{2}$. His results are for very general rings, but only treat free modules. These methods were extended by Friedrich \cite[Theorem 2.9]{Friedrich} to treat \emph{arbitrary} (not even projective!) modules over the same class of rings: in the case of Dedekind domains she shows\footnote{In her paper the ``rank of an $\cO$-module $M$'' denotes the largest $m$ such that $\cO^m$ is a direct summand of $M$, so by the classification of finitely-generated projective $\cO$-modules it is usually one less than what we are calling the rank (though they agree for free modules).} that
$$(- \oplus \mathrm{Id}_\cO)_* : H_d(\GL(M) ) \lra H_d(\GL(M \oplus \cO))$$
is an isomorphism for $d \leq \tfrac{\mathrm{rk}(M) - 4}{2}$ and an epimorphism for $d \leq \tfrac{\mathrm{rk}(M) - 3}{2}$ (with an improvement if $M$ is a free module). One deduces the same stability range for the maps \eqref{eq:GeneralStab} by exploiting the operation of tensoring with rank 1 projective modules.

\subsection{The homological stability statement}

We will prove a generic homological stability theorem in the context of $E_2$-algebras, having the following explicit consequence for general linear groups over Dedekind domains.

\begin{thm}\label{thm:StabDedekind}
Let $\cO$ be a Dedekind domain, $[L] \in \mathrm{Pic}(\cO)$, and $M$ be a finitely-generated projective $\cO$-module. Then
$$(- \oplus \mathrm{Id}_L)_* : H_d(\GL(M) ) \lra H_d(\GL(M \oplus L))$$
is an isomorphism for $d < \tfrac{\mathrm{rk}(M)-2}{2}$ and an epimorphism for $d < \tfrac{\mathrm{rk}(M)}{2}$.
\end{thm}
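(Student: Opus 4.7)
The plan is to follow the $E_k$-cells framework of \cite{e2cellsI}, extending the approach of Section 18.2 (which handled PIDs) to the general Dedekind setting. First, I would assemble the groups $\GL(M)$ into a single non-unital $E_2$-algebra $\gR$ in $\mathrm{Pic}(\cO) \times \bN_{>0}$-graded simplicial $\bZ$-modules: the groupoid of finitely-generated projective $\cO$-modules under $\oplus$ is symmetric monoidal, with $\pi_0$ the monoid $\mathrm{Pic}(\cO) \times \bN_{>0}$ (via the top exterior power and rank), so that $\gR_{[L], n}$ computes $H_*(B\GL(\cO^{n-1} \oplus L))$. The key point is that all stabilisation maps $(- \oplus \mathrm{Id}_L)$ --- for arbitrary rank-one projectives $L$, not just $L = \cO$ --- appear simultaneously as multiplication by classes in $H_0$ of $\gR$. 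The generic $E_2$-homological stability machinery of \cite{e2cellsI} then reduces the theorem to a vanishing statement for the $E_2$-homology $H^{E_2}_{[L'], n, d}(\gR)$ in a range roughly of slope $\tfrac{1}{2}$.

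Next, I would compute this $E_2$-homology via a bar-construction spectral sequence, which reduces it to $E_1$-homology, which in turn is identifiable with the reduced homology of a ``splitting complex'': the semi-simplicial set whose $p$-simplices in grade $([L'], n)$ are ordered direct sum decompositions of $\cO^{n-1} \oplus L'$ into $p+1$ nonzero projective summands. This is a variant of the (spherical) Tits building and of the complex of unimodular sequences appearing in Charney's and van der Kallen's work, and Charney has essentially shown that it enjoys slope-$\tfrac{1}{2}$ connectivity in the Dedekind setting.

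The main obstacle, and the key new feature compared to the PID case, is that the splitting complex in grade $([L'], n)$ mixes decompositions with all possible class-group labels $([L_1'], \ldots, [L_{p+1}'])$ summing to $[L']$; one must arrange the $E_2$-cell argument so that the $\mathrm{Pic}(\cO)$-grading is respected simplex-by-simplex, and verify that the connectivity bounds survive this stratification. In effect, the ``tensor with a rank-one projective'' trick used in the introduction to derive \eqref{eq:GeneralStab} from the case $N = \cO$ is absorbed into the $\mathrm{Pic}(\cO)$-graded $E_2$-algebra bookkeeping. Once this is set up correctly, Charney's connectivity results translate into the desired $E_2$-homology vanishing, and the generic $E_2$-stability theorem yields the isomorphism range $d < \tfrac{\rk(M)-2}{2}$ and the epimorphism range $d < \tfrac{\rk(M)}{2}$ claimed in the statement.
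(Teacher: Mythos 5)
Your overall plan --- assemble the groups $\GL(M)$ into an $E_2$-algebra, use the split Tits building connectivity of Charney to control its $E_1$-homology, and feed this into the $E_2$-cell machinery --- is indeed what the paper does, and your observation that the $\mathrm{Pic}(\cO)$-bookkeeping can be carried along simplex-by-simplex is correct. (Whether one treats this as a $\mathrm{Pic}(\cO)\times\bN$-grading as you propose, or as an $\bN$-grading with a non-polynomial degree-zero algebra $A_P = \bk[\mathrm{Pic}(\cO)]/(\rho\rho' - \sigma(\rho\star\rho'))$ as the paper does, is essentially cosmetic.)

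However, there is a genuine gap in the step ``Charney's connectivity results translate into the desired $E_2$-homology vanishing, and the generic $E_2$-stability theorem yields the isomorphism range $d < \tfrac{\rk(M)-2}{2}$ and the epimorphism range $d < \tfrac{\rk(M)}{2}$.'' In the PID case the degree-zero ring is $\bk[\sigma]$, a free commutative algebra, and there the slope-$1$ vanishing $H^{E_1}_{n,d}(\gR)=0$ for $d<n-1$ (which is what Charney's connectivity gives) does yield slope-$\tfrac12$ stability. But once $\mathrm{Pic}(\cO)\neq 0$ the degree-zero algebra $A_P$ has quadratic relations, so the CW-approximation to $\gR$ must carry extra $(2,1)$-cells, and the standard deduction breaks. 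The paper makes this failure explicit: under axioms (I) and (II) alone, Theorem~\ref{thm:StabGeneric} yields only $H_{n,d}(\overline{\gR}/\rho)=0$ for $d<\tfrac{n-1}{3}$, and Theorem~\ref{thm:RationalStab} together with Remark~\ref{rem:sqrtEstimateSharp} shows that the sharp range is $d<\tfrac{n-\sqrt{n}}{2}$ in characteristic zero --- in particular a slope-$\tfrac12$ bound is \emph{unattainable} in general from these two axioms. To recover slope $\tfrac12$ the paper needs the additional hypotheses (III) and (IV), i.e.\ $H^{\overline{\gR}}_{3,2}(\gA)=0$ and $H^{\overline{\gR}}_{4,3}(\gA)=0$, which via Lemma~\ref{lem:CheckHyp3and4} reduce to the surjectivity of the maps $\widetilde{\mathrm{St}}(M)\to\mathrm{St}(M)$ for $\rk(M)\le 4$ together with Church--Farb--Putman's presentation of the Steinberg $\mathrm{Tor}$ groups. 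The surjectivity $\widetilde{\mathrm{St}}(M)\to\mathrm{St}(M)$ (Theorem~\ref{thm:SteinbergEpi}) is the genuinely new geometric ingredient of the paper, proved by a delicate poset-fibration argument comparing the split and unsplit Tits buildings, and there is no trace of it in your outline. Without it, your argument would stall at a slope strictly worse than $\tfrac12$.
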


Given the selection of stabilisation maps, one for each element of $\mathrm{Pic}(\cO)$, it is a little unnatural to formulate a result such as Theorem \ref{thm:StabDedekind} for one stabilisation map at a time. Instead, the symmetric monoidal groupoid of all finitely-generated projective $\cO$-modules has homology
$$\bigoplus_{[M]} H_*(\GL(M) ),$$
where the sum is over representatives of each isomorphism class. Direct sum of modules endows this with the structure of a graded-commutative algebra, and recording the rank of a module endows it with an additional $\bN$-grading. By the classification of finitely-generated projective $\cO$-modules, in homological degree 0 we have an isomorphism of $\bN$-graded algebras
$$\bigoplus_{[M]} H_0(\GL(M) ) \cong  \bZ[\mathrm{Pic}(\cO)]/(\rho \cdot \rho' - \sigma \cdot (\rho\otimes\rho')) =: A_\cO$$
where we write $(\mathrm{Pic}(\cO), \otimes, \sigma=[\cO])$ for the abelian group of $\otimes$-invertible $\cO$-modules and $A_\cO$ is $\bN$-graded by considering each element of $\mathrm{Pic}(\cO)$ to have grading 1.\footnote{Here $\bZ[\mathrm{Pic}(\cO)]$ denotes the polynomial algebra on the set $\mathrm{Pic}(\cO)$, not the group algebra!} This discussion shows that for each homological degree $d$,
$$\bigoplus_{[M]} H_d(\GL(M) )$$
has the structure of an $\bN$-graded $A_\cO$-module. Our more conceptual homological stability result in this setting is then as follows.

\begin{thm}\label{thm:StabDedekindRegularity}
For each degree $d$, the $\bN$-graded $A_\cO$-module $\bigoplus_{[M]} H_d(\GL(M))$ is generated in gradings $\leq 2d$, and presented in gradings $\leq 2d+1$.
\end{thm}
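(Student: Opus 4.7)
The plan is to apply the cellular $E_2$-algebra machinery of \cite{e2cellsI}, upgrading the PID treatment of Section 18.2 there to the Dedekind case. Organize the symmetric monoidal groupoid of finitely-generated projective $\cO$-modules (with isomorphisms, under $\oplus$) as an $E_\infty$-algebra $\mathbf{R}$ in simplicial sets, with $\bN$-grading given by rank. Its homology in bidegree $(g,d)$ is $\bigoplus_{[M]\, :\, \mathrm{rk}(M)=g} H_d(\GL(M))$, and in particular $H_0(\mathbf{R}) = A_\cO$ as $\bN$-graded rings, with the generators of $A_\cO$ in grading $1$ given by the classes $[L] \in \mathrm{Pic}(\cO)$ and the relations $\rho\cdot\rho' = \sigma\cdot(\rho\otimes\rho')$ in grading $2$. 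The $A_\cO$-module structure on $H_d(\mathbf{R})$ in the theorem is just the one induced by multiplication in $\mathbf{R}$.

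The conclusion of the theorem translates, via the standard GKRW dictionary, into the single vanishing statement
\[ H^{E_1}_{g,d}(\mathbf{R}) = 0 \quad\text{for all}\quad g > 2d+1. \]
Indeed, $H^{E_1}_{*,d}$ computes the $H_0(\mathbf{R})$-module indecomposables of $H_d(\mathbf{R})$: generation in gradings $\leq 2d$ is the vanishing for $d' = d$, $g > 2d'$, while the presentation bound comes from applying the same vanishing one homological degree up, at $d' = d+1$, $g > 2(d+1)-1 = 2d+1$. Combining both gives exactly the bound $g > 2d+1$.

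To prove this vanishing, I would use the GKRW reduction to a connectivity statement about a \emph{splitting complex}: roughly, a semi-simplicial set $\mathbf{Z}_\bullet(M)$ whose $p$-simplices are ordered rank-$1$ direct summand decompositions $M \cong L_0 \oplus L_1 \oplus \cdots \oplus L_p \oplus M'$ (with each $L_i$ of rank $1$, but allowed to lie in any class of $\mathrm{Pic}(\cO)$). The required vanishing $H^{E_1}_{g,d}(\mathbf{R}) = 0$ for $g > 2d+1$ is a consequence of $\mathbf{Z}_\bullet(M)$ being roughly $\lfloor(\mathrm{rk}(M)-3)/2\rfloor$-connected for each $M$, by a direct application of the connectivity-to-$E_1$-homology theorem from \cite{e2cellsI}.

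The main obstacle is establishing the connectivity of $\mathbf{Z}_\bullet(M)$ when $\mathrm{Pic}(\cO)$ is nontrivial. For PIDs this is classical, essentially van der Kallen's theorem on the complex of unimodular sequences. For general Dedekind domains one does not have a preferred rank-$1$ summand class, so a direct reduction to the free case fails. I expect the argument to proceed by comparing $\mathbf{Z}_\bullet(M)$ with a Charney-style complex of partial splittings in which summand classes are unconstrained, and then running a Quillen-type poset argument to transfer known connectivity of the unimodular complex across the classification of finitely-generated projective modules (which, over a Dedekind domain, guarantees that any partial splitting can be extended using a summand in any prescribed class of $\mathrm{Pic}(\cO)$, at the cost of trading classes elsewhere). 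Once this connectivity bound is established, the remainder is a mechanical invocation of the GKRW formalism.
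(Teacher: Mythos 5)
There is a genuine gap, and it is at the very center of the argument. You claim that the conclusion of the theorem ``translates, via the standard GKRW dictionary, into the single vanishing statement $H^{E_1}_{g,d}(\mathbf{R}) = 0$ for $g > 2d+1$,'' on the grounds that $H^{E_1}_{*,d}$ ``computes the $H_0(\mathbf{R})$-module indecomposables of $H_d(\mathbf{R})$.'' This is not what $E_1$-homology computes: it computes the \emph{derived} $E_1$-indecomposables of $\gR$, and the passage from there to underived $A_\cO$-module generation and presentation bounds for $H_{*,d}(\overline{\gR})$ goes through a spectral sequence (one filters $\bk \simeq \bk \otimes^\bL_{\overline{\gR}} \overline{\gR}$ by the Postnikov filtration of $\overline{\gR}$) that does not preserve slopes. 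In fact, Charney's connectivity theorem already gives the much stronger vanishing $H^{E_1}_{n,d}(\gR)=0$ for $d < n-1$---not the slope-$\tfrac12$ region you are asking for, but the slope-$1$ region---and \emph{even this} only yields, via the spectral sequence, generation in gradings $\leq 3d$ and presentation in gradings $\leq 3d+1$. This is the content of the first part of Theorem~\ref{thm:StabGeneric}. Your proposed connectivity estimate of roughly $\lfloor(\mathrm{rk}(M)-3)/2\rfloor$ is strictly weaker than Charney's $(\mathrm{rk}(M)-3)$, so would be a step backwards, and in any case would not suffice.

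The hard part of the theorem is therefore precisely the improvement from slope $\tfrac13$ to slope $\tfrac12$, and this requires additional input beyond any connectivity estimate for the splitting complex. In the paper this takes the form of the extra hypotheses (III) and (IV) of Theorem~\ref{thm:StabGeneric}, namely $H^{\overline{\gR}}_{3,2}(\gA)=0$ and $H^{\overline{\gR}}_{4,3}(\gA)=0$. These are automatic when $\mathrm{Pic}(\cO)$ is trivial (the PID case, where $A_\cO$ is a one-variable polynomial algebra), which is likely why you did not notice their necessity; but for a general Dedekind domain $A_\cO$ is a nontrivial quadratic algebra and the hypotheses must be verified. By Lemma~\ref{lem:CheckHyp3and4} they reduce to the surjectivity of certain maps on $E_1$-homology, which in turn reduces to showing that the natural map from the split Steinberg module $\widetilde{\mathrm{St}}(M) = \tilde{H}_{n-2}(\widetilde{T}(M))$ to the Steinberg module $\mathrm{St}(M) = \tilde{H}_{n-2}(T(M))$ is surjective for $\mathrm{rk}(M) \leq 4$ (Theorem~\ref{thm:SteinbergEpi}), combined with Church--Farb--Putman's result that the map $H_0(\GL(M);\mathrm{St}(M)) \to \tilde{H}_{n-2}(|X_{n-1}(\mathrm{Pic}(\cO))|)$ is surjective (used in Theorem~\ref{thm:CFP}). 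Your proposal makes no mention of the Steinberg modules and hence is missing the essential new geometric input that distinguishes the Dedekind case from the PID case.
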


In Section \ref{sec:Example} we will justify the following explicit example.

\begin{example}
The Dedekind domain $\cO := \bZ[\sqrt{-5}]$ has $\mathrm{Pic}(\cO) = \{\sigma, \lambda\}$ with $\sigma = [\cO]$ and nontrivial element $\lambda$ represented by the ideal $\mathfrak{l}=(2,1+\sqrt{-5})$. The abelianisations of the automorphism groups of the finitely-generated projective $\cO$-modules are:
\begin{center}
\begin{tabular}{ c|cccc } 
 $n$ &  1 & 2 & 3 & 4 \\ \hline
 $\GL(\cO^n)^{ab}$ &  $\bZ/2\{X\}$ & $\bZ/2\{\sigma \cdot X,U,T,B,C\}$ & $\bZ/2\{\sigma^2 \cdot X\}$ & $\bZ/2\{\sigma^3 \cdot X\}$\\ 
 $\GL(\cO^{n-1} \oplus \mathfrak{l})^{ab}$ &  $\bZ/2\{X'\}$ & $\bZ/2\{\sigma \cdot X', C', D'\}$ & $\bZ/2\{\sigma^2 \cdot X'\}$ & $\bZ/2\{\sigma^3 \cdot X'\}$
\end{tabular}
\end{center}
where $\lambda \cdot X = \sigma \cdot X'$, $\lambda \cdot X' = \sigma \cdot X$, and $U, T, B, C, C', D'$ are all annihilated by $\sigma$ and $\lambda$. We see that this $A_\cO$-module is indeed generated in gradings $\leq 2$ and presented in gradings $\leq 3$.
\end{example}

It is a simple exercise to deduce Theorem \ref{thm:StabDedekind} from Theorem \ref{thm:StabDedekindRegularity}, but Theorem \ref{thm:StabDedekindRegularity} is stronger. For example, it includes the statement that for each $[L] \in \mathrm{Pic}(\cO)$ the sum of the stabilisation maps
$$\bigoplus_{[L'] \in \mathrm{Pic}(\cO)} (- \oplus L \otimes (L')^{-1})_* : \bigoplus_{[L'] \in \mathrm{Pic}(\cO)} H_d(\GL(\cO^{n-2} \oplus L') ) \lra H_d(\GL(\cO^{n-1} \oplus L) )$$
is surjective as long as $n > 2d$ (whereas Theorem \ref{thm:StabDedekind} says that the individual stabilisation maps are surjective as long as $n+1 > 2d$).

\subsection{Reductive Borel--Serre spaces}
The spaces $B\GL(M)$ can be considered as an unstable approximation to the algebraic $K$-theory space $\Omega^\infty_0\mathrm{K}(\cO)$, from which point of view the homological stability results of the previous section can be viewed as estimating the quality of this approximation.

Another unstable approximation to $\Omega^\infty_0\mathrm{K}(\cO)$ has been recently introduced by Clausen and Jansen \cite{ClausenJansen}, in the form of the reductive Borel--Serre spaces $|\RBS(M)|$; they are better approximations in the sense that there is a factorisation
$$B\GL(M) \lra |\RBS(M)| \lra \Omega^\infty_0\mathrm{K}(\cO).$$
Using recent work of Jansen \cite{Jansen2}, we will explain how our general method applies equally well to these spaces, and the exact analogues of Theorems \ref{thm:StabDedekind} and \ref{thm:StabDedekindRegularity} hold with $|\RBS(M)|$ in place of $B\GL(M)$. 

\begin{rem}
We will show elsewhere that when $\cO$ has class number 1 then the $|\RBS(M)|$ enjoy homological stability with slope 1, which is much better than the $B\GL(M)$. Perhaps this is also the case for general $\cO$; we do not pursue that here.
\end{rem}

\subsection{The generic homological stability statement}

To formulate our generic homological stability theorem, we assume familiarity with the language and notation developed in \cite{e2cellsI}. Fix a commutative ring $\bk$, and work in the category $\mathsf{sMod}_\bk^\bN$ of $\bN$-graded simplicial $\bk$-modules. The statement will make use of the following construction, which axiomatises what we did in the previous section. Let $P = (P, \star, \sigma)$ be an abelian group, and define a commutative $\bk$-algebra
$$A_P := \bk[P]/(\rho \cdot \rho' - \sigma \cdot (\rho\star \rho') \, | \, \rho, \rho' \in P).$$
As the relations imposed are homogeneous with respect to word-length in the free commutative $\bk$-algebra $\bk[P]$, we may consider $A_P$ as being $\bN$-graded. It has an augmentation $\epsilon : A_P \to \bk$ by sending all $\rho \in P$ to 0. Following \cite[\S 12.2.1]{e2cellsI}, for a non-unital $E_2$-algebra $\gR$ we write $\overline{\gR}$ for the unital associative algebra obtained by adding a unit and strictifying the $E_1$-structure. 

\begin{thm}\label{thm:StabGeneric}
Let $\gR \in \mathsf{Alg}_{E_2}(\mathsf{sMod}_\bk^\bN)$ satisfy $H_{0,0}(\gR)=0$ as well as
\begin{enumerate}[(I)]
\item $H_{n,d}^{E_1}(\gR)=0$ for $d < n-1$, and
\item there is an abelian group $(P, \star, \sigma)$ such that $H_{*,0}(\overline{\gR}) \cong A_P$ as $\bN$-graded $\bk$-algebras.
\end{enumerate}
Then for each degree $d$, $H_{*,d}(\overline{\gR})$ has the structure of an $\bN$-graded $A_P$-module, and as such it is generated in gradings $\leq 3d$ and presented in gradings $\leq 3d+1$. In particular, for any element $\rho \in P \subset H_{1,0}(\overline{\gR})$ the homotopy cofibre $\overline{\gR}/\rho$ of the map $\rho \cdot - : S^{1,0} \otimes \overline{\gR} \to \overline{\gR}$ satisfies $H_{n,d}(\overline{\gR}/\rho)=0$ for $d < \tfrac{n-1}{3}$.

Suppose that in addition to the above we have
\begin{enumerate}[(I)]
\setcounter{enumi}{2}
\item $H_{3,2}^{\overline{\gR}}(A_P)=0$.
\end{enumerate}
Then the $\bN$-graded  $A_P$-module $H_{*,d}(\overline{\gR})$ is generated in gradings $\leq 3d-1$, and is presented in gradings $\leq 3d+1$, but also $H_{n,d}(\overline{\gR}/\rho)=0$ for $d < \tfrac{n-1}{2}$.

Suppose that in addition to all the above we have
\begin{enumerate}[(I)]
\setcounter{enumi}{3}
\item $H_{4,3}^{\overline{\gR}}(A_P)=0$.
\end{enumerate}
Then the $\bN$-graded  $A_P$-module $H_{*,d}(\overline{\gR})$ is generated in gradings $\leq 2d$, and is presented in gradings $\leq 2d+1$. 
\end{thm}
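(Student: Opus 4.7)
My strategy is to extend the bootstrap used for the first two parts of the theorem: sharpened vanishing on the relative module homology $H^{\overline{\gR}}_{*,*}(A_P)$ translates, via the cellular $\overline{\gR}$-module framework of \cite{e2cellsI}, into improved generation and presentation bounds on the $A_P$-modules $H_{*,d}(\overline{\gR})$. Each successive hypothesis (III), (IV) shaves off one additional potentially nonzero class on the critical boundary and improves the range accordingly.

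The first step is to sharpen the vanishing range for $H^{\overline{\gR}}_{n,d}(A_P)$ established in part two. That part provides $H^{\overline{\gR}}_{n,d}(A_P) = 0$ for $d < (n-1)/2$. Hypothesis (IV), $H^{\overline{\gR}}_{4,3}(A_P) = 0$, is precisely the missing ingredient needed to run one further step of the cellular $E_2$-algebra induction that underlies the previous parts: the class at bidegree $(4,3)$ is the only potential obstruction on the critical diagonal to enlarging the vanishing region to the range needed for the slope-$\tfrac12$ generation bound, and once it is known to be zero, a standard propagation argument exploiting the multiplicative structure on $\overline{\gR}$ (as in part two) extends the vanishing to all relevant bidegrees.

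The second step is the translation into generation and presentation bounds. I would use the same bar-type spectral sequence employed in parts one and two, whose $E^2$-term is built from $\Tor^{A_P}$ of $H_{*,*}(\overline{\gR})$ and whose abutment is $H^{\overline{\gR}}_{*,*}(A_P)$. Read in reverse, the improved vanishing on the abutment forces $\Tor^{A_P}_0(\bk, H_{*,d}(\overline{\gR}))$ and $\Tor^{A_P}_1(\bk, H_{*,d}(\overline{\gR}))$ to be supported in gradings $\leq 2d$ and $\leq 2d+1$ respectively, which is exactly the claimed generation and presentation statement for $H_{*,d}(\overline{\gR})$ as an $\bN$-graded $A_P$-module.

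The main obstacle I anticipate is the propagation in the first step: verifying that the single bidegree vanishing in hypothesis (IV) really does rule out all potential nonzero classes along the full critical diagonal. This requires careful bookkeeping of the multiplicative structure on $H^{\overline{\gR}}_{*,*}(A_P)$ together with the explicit algebra relations $\rho \cdot \rho' = \sigma \cdot (\rho\star\rho')$ in $A_P$, in close analogy with the class-number-one case handled in \cite[\S 18.2]{e2cellsI}. Once this propagation is established, the remaining translation step is mechanical given what has already been developed for the first two parts.
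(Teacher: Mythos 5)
Your proposal has the right broad flavour---a bar-type spectral sequence plus improved vanishing inputs giving improved generation bounds---but several of the specifics are wrong, and the missing structure is essential.

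First, the vanishing you attribute to the earlier part of the argument is misstated. The paper's Lemma~\ref{lem:RHomOfA} gives $H^{\overline{\gR}}_{n,d}(\gA)=0$ for $d < n-1$, a \emph{slope-one} vanishing line. There is no slope-$\tfrac12$ vanishing of $H^{\overline{\gR}}_{n,d}(A_P)$ lurking in parts one or two; you appear to be conflating $H^{\overline{\gR}}_{n,d}(A_P)$ with the very different statement $H_{n,d}(\overline{\gR}/\rho)=0$ for $d < \tfrac{n-1}{2}$. Hypotheses (III) and (IV) do not push a slope-$\tfrac12$ line outward by "propagation along the critical diagonal"; they kill exactly two specific bidegrees, $(3,2)$ and $(4,3)$, on the line $d=n-1$. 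There is no mechanism in the paper (and I do not see how one would supply one) by which the single vanishing $H^{\overline{\gR}}_{4,3}(\gA)=0$ could propagate along that diagonal.

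Second, your spectral sequence is misidentified. The one used in the paper comes from filtering $\overline{\gR}$ (as a left module over itself) by its Postnikov tower, giving $E^2_{n,s,t} = H^{\overline{\gR}}_{n,s}(H_{*,t}(\overline{\gR})) \Rightarrow H_{n,t+s}(\bk)$. Its $E^2$-term is $\overline{\gR}$-module Tor, not $A_P$-Tor (they only coincide in the columns $s \leq 1$, by \eqref{eq:RHomologyIsAHomology}), and its abutment is $H_{*,*}(\bk)$, which vanishes in positive total degree---not $H^{\overline{\gR}}_{*,*}(A_P)$. The whole point is that the abutment is (essentially) zero, so the differentials out of the top nonzero entries must be epimorphisms; this is what drives the induction on the row index $t$.

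Third, and most importantly, you are missing the intermediate machinery that converts hypotheses (III) and (IV) into usable bounds. The paper's argument hinges on Lemma~\ref{lem:RModREgularityEstimates}, which bootstraps: a $\mathrm{Tor}$-regularity estimate over the discrete algebra $A_P$ (Lemma~\ref{lem:RegA}) is lifted to an estimate for $h^{\overline{\gR}}_d(\gM)$ by factoring $\bk \otimes^\bL_{\overline{\gR}} \gM \simeq (\bk \otimes^\bL_{\overline{\gR}} \gA) \otimes^\bL_\gA \gM$ and using a K\"unneth spectral sequence together with the computed support of $H^{\overline{\gR}}_{*,*}(\gA)$. Hypotheses (III) and (IV) feed into this lemma as sharpened inputs $h^{\overline{\gR}}_2(\gA) \leq 2$ and $h^{\overline{\gR}}_3(\gA) \leq 3$, which in turn sharpen the output estimates on $h^{\overline{\gR}}_d(\gM)$ and sharpen the starting rows of the spectral sequence. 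Without Lemma~\ref{lem:RegA} and Lemma~\ref{lem:RModREgularityEstimates} there is no way to "complete the rows" of the spectral sequence beyond the columns $s \leq 1$, so the induction cannot be run. Your proposal does not supply a replacement for this bootstrap, and your "standard propagation argument" is not a substitute for it.

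In short: the direction of the argument (improved vanishing implies improved generation/presentation, extracted from a spectral sequence collapsing to zero) is right, but the slope you start from is wrong, the spectral sequence and its abutment are misidentified, and the key mechanism (Lemmas~\ref{lem:RegA} and~\ref{lem:RModREgularityEstimates}) by which the isolated bidegree vanishings (III) and (IV) translate into bounds is absent.
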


\begin{rem}
The special case $P=\bZ/2$ has been analysed in much more detail by Sierra \cite[Section 2]{SierraQuad}. See also \cite[Theorem 2.4]{SierraMfld}.
\end{rem}

We also record here the following theorem, which resulted from early attempts to prove a stability range of slope $\tfrac{1}{2}$ only under assumptions (I) and (II). It is mainly notable because the stability range $d < \tfrac{n-\sqrt{n}}{2}$ in the first part is best possible under assumptions (I) and (II) (see Remark \ref{rem:sqrtEstimateSharp}). We find this surprising, as we have never seen an honestly non-linear stability range ``in nature''.

\begin{thm}\label{thm:RationalStab}
Let $\gR \in \mathsf{Alg}_{E_\infty}(\mathsf{sMod}_\bk^\bN)$ satisfy (I) and (II) above, with $\bk$ a field of characteristic zero. Then we have $H_{n,d}(\overline{\gR}/\rho)  =0$ for $d < \tfrac{n-\sqrt{n}}{2}$.

If $P$ is finite then we also have $H_{n,d}(\overline{\gR}/\rho)=0$ for $d < \tfrac{n-|P|+1}{2}$.
\end{thm}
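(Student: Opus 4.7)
The plan is to leverage the characteristic-zero identification $E_\infty \simeq \mathrm{Com}$, under which free $E_\infty$-algebras become symmetric algebras and cell structures admit explicit bigraded descriptions via symmetric-function combinatorics. This reduces the stability statement to a combinatorial count, constrained by the $A_P$-structure from hypothesis (II).

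First I would replace $\gR$ by a minimal CW $E_\infty$-algebra model $\gZ \xrightarrow{\sim} \gR$, using the framework of \cite{e2cellsI}. In characteristic zero, $E_\infty$-indecomposables coincide with the derived indecomposables of the associated CDGA, and a comparison argument (via the Koszul duality between $E_1$ and the bar construction, which in characteristic zero relates $H^{E_1}$ to exterior-algebra constructions on $H^{E_\infty}$) will transfer hypothesis (I) into the statement that the $E_\infty$-cells of $\gZ$ sit in bidegrees $(n, d)$ with $d \geq n - 1$. Hypothesis (II) then identifies the $(1, 0)$-cells with a basis of $\bk[P] = H_{1, 0}(\overline{\gR})$ and forces the presence of cells at bidegree $(2, 1)$ witnessing the quadratic relations $\rho \cdot \rho' = \sigma \cdot (\rho \star \rho')$ defining $A_P$.

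Next I would analyse $H_{*,*}(\overline{\gR}/\rho)$ using this CW description: up to boundaries, each class corresponds to a symmetric monomial $\prod_i x_i^{a_i}$ in cells of $\gZ$ other than $\rho$. In bidegree $(n, d) = (\sum a_i n_i, \sum a_i d_i)$ the constraint $d_i \geq n_i - 1$ forces $d \geq N' - m \geq (n - k)/2$, where $k$ is the number of factors at $(1, 0)$ and $m$, $N'$ count the higher-bidegree factors (and each satisfies $n_i \geq 2$, so $m \leq N'/2$). Thus a nontrivial class at $d < (n - \sqrt{n})/2$ would require $k > \sqrt{n}$ distinct $(1, 0)$-cells appearing together in a single monomial. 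But the quadratic relations of (II) equate such a product, after $k-1$ reductions, with $\sigma^{k-1}$ times a single element of $\bk[P]$, which becomes a boundary after modding out by $\rho$. Tracking the syzygies of the $A_P$-structure at bidegrees $(3, 2), (4, 3), \ldots$ controls the higher-degree residue, and an AM--GM-type inequality bounding how many independent $(1, 0)$-cells can co-occur in a non-boundary class of a given bidegree yields $k \lesssim \sqrt{n}$, hence the bound. For $P$ finite the count $k \leq |P| - 1$ is immediate (excluding $\rho$), giving the sharper $d \geq (n - |P| + 1)/2$.

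The main obstacle is the careful bookkeeping of the syzygies from (II): it must be shown that the only way a monomial involving many $(1, 0)$-cells can fail to be annihilated in $\overline{\gR}/\rho$ is by absorbing a growing number of higher-bidegree cells, producing the quadratic relationship $k^2 \lesssim n$. Justifying the passage from hypothesis (I) on $E_1$-indecomposables to the corresponding bound on $E_\infty$-cells is also delicate, and is precisely where the $E_\infty$ and characteristic-zero assumptions crucially enter via the identification with CDGAs and the tractability of symmetric powers. The sharpness recorded in Remark \ref{rem:sqrtEstimateSharp} should then follow from constructing an explicit $E_\infty$-algebra saturating the AM--GM inequality above.
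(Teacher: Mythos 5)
Your high-level strategy (work in characteristic zero so $E_\infty$ becomes $\mathrm{Com}$, build a CW model with $(1,0)$-cells for $P$ and $(2,1)$-cells witnessing the quadratic relations of $A_P$, and reduce the vanishing to a combinatorial slope estimate) is indeed the right one, and matches the skeleton of the paper's proof. But there are two genuine gaps.

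First, a minor one: the slope bound you impose on the higher $E_\infty$-cells is wrong. Assumption (I) is about $E_1$-homology and says $H^{E_1}_{n,d}(\gR)=0$ for $d<n-1$. Passing to $E_\infty$-homology via the iterated bar construction roughly halves the slope, and the paper only obtains $H^{E_\infty}_{n,d}(\gR,\gC)=0$ for $d<\tfrac{1}{2}n$, not $d<n-1$. So the higher cells can genuinely sit at slope $\tfrac12$ (e.g.\ bidegree $(4,2)$), not at slope $1$. Your bookkeeping survives with the correct slope-$\tfrac12$ bound, since it still gives $d\geq (n-k)/2$, so this is repairable, but as stated it is incorrect.

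Second, and this is the real problem: the core of the argument, ``an AM--GM-type inequality bounding how many independent $(1,0)$-cells can co-occur in a non-boundary class'', is not a proof, and it is precisely the hard part of the theorem. Homology classes are not individual monomials, so ``the number $k$ of $(1,0)$-cells in a class'' is not well-defined; what one must show is that the homology of the Koszul-type CDGA $\left(\mathrm{Sym}^*(V[1,0])\otimes\Lambda^*(\mathrm{Sym}^2(V)[2,1]),\partial\right)$ vanishes in bidegrees $(n,d)$ with $d<\tfrac{n-\sqrt{n}}{2}$. Your heuristic that ``the quadratic relations equate such a product, after $k-1$ reductions, with $\sigma^{k-1}$ times a single element'' describes what happens at the level of $H_{*,0}$, i.e.\ in $A_P$, but does not control cycles and boundaries in higher homological degree, which is where the content lies. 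The paper proves the vanishing using the theorem of J{\'o}zefiak and Weyman, which identifies the homology of this CDGA as a direct sum of Schur functors $S_\lambda(V)$ over self-conjugate partitions $\lambda$ with $|\lambda|=n$ and diagonal length $d(\lambda)=n-2d$; the inequality $n\geq (n-2d)^2$ (since a self-conjugate partition with diagonal length $k$ contains the $k\times k$ square) is exactly what produces $\sqrt{n}$, and this is a nontrivial representation-theoretic fact that your bookkeeping of syzygies does not establish. (The remark in the paper that the corresponding statement fails in characteristic $3$, via Bouc's computation of matching complex homology, shows that some genuinely non-elementary input is required.) To complete your proof you would need to either invoke J{\'o}zefiak--Weyman as the paper does, or find an independent argument computing the homology of this complex.
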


\vspace{2ex}

\noindent \textbf{Acknowledgements.} I would like to thank Mikala Ørsnes Jansen for sharing a draft of \cite{Jansen2}, and Robin J.\ Sroka for a useful discussion around the subject of Theorem \ref{thm:SteinbergEpi}. I was supported by the ERC under the European Union’s Horizon 2020 research and innovation programme (grant agreement No.~756444).

\section{Proof of Theorem \ref{thm:StabGeneric}}

\subsection{Regularity for $A_P$}

We begin with some homological algebra over $A := A_P$, considered with its $\bN$-grading, and with its unique augmentation $\epsilon : A \to \bk$. We write $I := I_P$ for its augmentation ideal. From the relations defining $A_P$ we see that: $A(0) = \bk$, $A(1) = \bk\{P\}$, and more generally for any $n \geq 1$ there is an isomorphism $\bk\{P\} \overset{\sim}\to A(n)$ given by sending $\rho \in P$ to $\sigma^{\cdot(n-1)} \cdot \rho$.

For an $\bN$-graded $A$-module $M$, we will consider the (bigraded) $\mathrm{Tor}$-groups 
$$H^{A}_{n,d}(M) := H_{n,d}(\bk \otimes^\bL_A M) =  \mathrm{Tor}^{A}_{n, d}(\bk, M),$$ 
and write
$$h^{A}_d(M) := \max\{n \in \bN \, | \, H^{A}_{n,d}(M) \neq 0\}.$$
The module $M$ is therefore generated in gradings $\leq h_0^{A}(M)$, and is presented in gradings $\leq \max(h_0^{A}(M),h_1^{A}(M))$. 

\begin{lem}\label{lem:RegA}
We have $h^A_d(M) \leq d-1+ h_1^A(M)$ for $d \geq 2$.
\end{lem}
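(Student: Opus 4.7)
The plan is to reduce the lemma to the recursive bound $h^A_d(M) \le h^A_{d-1}(M) + 1$ for all $d \ge 2$, from which the lemma follows by telescoping back to the trivial inequality $h^A_1(M) \le h^A_1(M)$. The key structural input is the basis description of $A = A_P$ recorded just before the lemma: since $A(n)$ has basis $\{\sigma^{n-1}\rho : \rho \in P\}$ for $n \ge 1$, multiplication by $\sigma$ induces a $\bk$-linear isomorphism $A(n-1) \xrightarrow{\sim} A(n)$ for every $n \ge 2$, and in particular $\sigma$ is a non-zero-divisor in $A$ and hence in every free $A$-module.

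For the recursive bound I fix a minimal free resolution $\cdots \to F_d \xrightarrow{\partial_d} F_{d-1} \to \cdots \to F_0 \to M$ and set $h := h^A_{d-1}(M)$, so $F_{d-1}$ is minimally generated in gradings $\le h$. Since $F_d$ is chosen minimal, its generators are in bijection with minimal generators of the syzygy module $Z_{d-1} := \ker(\partial_{d-1})$, and it suffices to show that $Z_{d-1}$ is generated in gradings $\le h + 1$. Take $y \in Z_{d-1}$ of grading $n \ge h + 2$ and expand $y = \sum_j c_j b_j$ in a homogeneous basis $\{b_j\}$ of $F_{d-1}$ with $b_j$ of grading $e_j \le h$. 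Then each coefficient satisfies $c_j \in A(n - e_j)$ with $n - e_j \ge 2$, so by the structural property we may write $c_j = \sigma \cdot c_j'$ for some $c_j' \in A(n - e_j - 1)$. Setting $y' := \sum_j c_j' b_j$ gives $y = \sigma y'$, and $A$-linearity of $\partial_{d-1}$ yields $\sigma \partial_{d-1}(y') = \partial_{d-1}(y) = 0$; since $\sigma$ is a non-zero-divisor on the free module $F_{d-2}$, this forces $\partial_{d-1}(y') = 0$, i.e.\ $y' \in Z_{d-1}$. Hence $y = \sigma y' \in \sigma Z_{d-1} \subseteq I Z_{d-1}$ is redundant modulo $I Z_{d-1}$, so minimal generators of $Z_{d-1}$ live in gradings $\le h + 1$, giving $h^A_d(M) \le h + 1 = h^A_{d-1}(M) + 1$.

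The argument is very short once the ``peel off a factor of $\sigma$'' idea is in hand; the only real obstacle is spotting that this move is available. The restriction to $d \ge 2$ is essential and arises naturally at the factoring step, because $A(m) = \sigma \cdot A(m-1)$ holds only for $m \ge 2$: at $m = 1$ one has $\sigma \cdot A(0) = \bk\{\sigma\} \subsetneq A(1) = \bk\{P\}$, so the factoring fails, consistent with the lemma allowing $h^A_1(M)$ to exceed $h^A_0(M) + 1$ freely. Notably, no Koszulness of $A$ or further homological machinery is needed for this argument.
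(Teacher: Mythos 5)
Your argument is correct. You establish the recursive bound $h^A_d(M)\le h^A_{d-1}(M)+1$ for $d\ge 2$, exactly as the paper does, but by a genuinely different route. The paper works through a change of rings along $B=\bk[\sigma]\to A$: using that $A$ is a free $B$-module (so that $\bk\otimes^{\bL}_B A$ is concentrated in homological degree $0$), it produces a homotopy cofibre sequence $S^{1,0}\otimes\bk\{P\}/\bk\{\sigma\}\to\bk\otimes^{\bL}_B A\to S^{0,0}\otimes\bk$ of right $A$-modules, and then applies $-\otimes^{\bL}_A M$; the resulting long exact sequence, combined with $H^B_{*,d}(M)=0$ for $d\ge 2$ (since $B$ has global dimension $1$), yields an injection $H^A_{n,d}(M)\hookrightarrow(\bk\{P\}/\bk\{\sigma\})\otimes H^A_{n-1,d-1}(M)$. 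You instead run an explicit element-level argument in a graded minimal free resolution: every homogeneous syzygy $y\in Z_{d-1}$ of grading $n\ge h^A_{d-1}(M)+2$ has all its coefficients in gradings $\ge 2$, so each can be written as $\sigma$ times something, and since $\sigma$ is a non-zero-divisor on the free module $F_{d-2}$ the divided element $y'$ with $y=\sigma y'$ is again a syzygy; hence $y\in IZ_{d-1}$ and, by graded Nakayama, the minimal generators of $Z_{d-1}$ (equivalently, a basis of $F_d$) live in gradings $\le h^A_{d-1}(M)+1$. Both proofs ultimately rest on the same structural observation, that $\sigma\cdot-:A(n-1)\to A(n)$ is an isomorphism for $n\ge 2$ and injective for $n=1$; the paper packages this as freeness of $A$ over $\bk[\sigma]$, whereas you use it directly to peel off a factor of $\sigma$. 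Your version is more elementary and self-contained (no derived tensor products or change-of-rings spectral sequence), while the paper's version fits more naturally alongside the derived machinery used in the rest of Section 2. One small point worth making explicit in your write-up: a graded minimal free resolution exists because $A$ is a connected $\bN$-graded $\bk$-algebra and $M$ is bounded below, even though $A$ need not be Noetherian (e.g.\ when $P$ is infinite) and the $F_i$ may have infinite rank; the peel-off and Nakayama steps go through unchanged in that generality since each homogeneous element of a free module is a finite sum.
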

\begin{proof}
Let $B := \bk[\sigma]$, a polynomial algebra in one variable, and let $B \to A$ send $\sigma$ to the symbol of the same name. Via this map we can consider $M$ as a $B$-module. There is an equivalence $\bk \otimes^\bL_B M \simeq (\bk \otimes_B^\bL A) \otimes^\bL_A M$.

We may compute $B$-module homology by using the free $B$-module resolution $0 \to S^{1,0} \otimes B \overset{\sigma \cdot -}\to B \to \bk \to 0$. This shows that $H^B_{*,d}(-)=0$ for $d \geq 2$, and with the explicit presentation of $A$ gives
$$H^B_{n,d}(A) = \begin{cases}
\bk & (n,d) = (0,0)\\
\bk\{P\}/\bk\{\sigma\} & (n,d) = (1,0)\\
0 & \text{else}.
\end{cases}$$
In particular there is a homotopy cofibre sequence of right $A$-modules of the form
$$S^{1,0} \otimes \bk\{P\}/\bk\{\sigma\} \lra \bk \otimes_B^\bL A \lra S^{0,0} \otimes \bk.$$
Applying $- \otimes^\bL_A M$ and using the equivalence above leads to a long exact sequence
\begin{equation*}
\begin{tikzcd}
 \cdots (\bk\{P\}/\bk\{\sigma\}) \otimes H^A_{n-1,d}(M) \rar & H^B_{n,d}(M) \rar
             \ar[draw=none]{d}[name=Z, anchor=center]{}
    & H^A_{n,d}(M) \ar[rounded corners,
            to path={ -- ([xshift=2ex]\tikztostart.east)
                      |- (Z.center) \tikztonodes
                      -| ([xshift=-2ex]\tikztotarget.west)
                      -- (\tikztotarget)}]{dll}[at end, swap]{\partial} \\ 
  (\bk\{P\}/\bk\{\sigma\}) \otimes H^A_{n-1,d-1}(M) \rar & H^B_{n,d-1}(M) \rar
    & H^A_{n,d-1}(M) \cdots.
\end{tikzcd}
\end{equation*}
For $d \geq 2$, as $H^B_{*,d}(M)=0$ this gives an injection
$$\partial : H^A_{n,d}(M) \lra (\bk\{P\}/\bk\{\sigma\}) \otimes H^A_{n-1,d-1}(M)$$
so that $h^A_d(M) \leq h^A_{d-1}(M)+1$ as required.
\end{proof}

\begin{cor}\label{cor:AIsKoszul}
The algebra $A$ is Koszul, i.e.\ $\mathrm{Tor}^A_{n,d}(\bk, \bk)=0$ for $n \neq d$.
\end{cor}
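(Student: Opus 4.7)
The plan is a direct application of Lemma \ref{lem:RegA} to the case $M = \bk$, combined with the standard connectivity bound for graded connected algebras.

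First I would unpack the quantity $h^A_d(\bk)$ in the two low-degree cases. Since $A(0)=\bk$ we have $H^A_{0,d}(\bk) = (\bk\otimes_A\bk)_d$, which vanishes except at $(n,d)=(0,0)$, so $h^A_0(\bk)=0$. For $h^A_1(\bk)$ I would identify $H^A_{1,*}(\bk)$ with the indecomposables $I/I^2$ of the augmentation ideal $I$, bigraded. Using the explicit description $A(n) = \sigma^{n-1}\cdot \bk\{P\}$ given just before Lemma \ref{lem:RegA}, every $A(n)$ with $n \geq 2$ lies in $I^2$ (as $\sigma \in A(1) \subset I$), whereas $A(1) = \bk\{P\}$ survives; hence $I/I^2 \cong \bk\{P\}$, concentrated in internal grading $1$. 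Therefore $h^A_1(\bk) = 1$.

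Feeding these into Lemma \ref{lem:RegA} gives $h^A_d(\bk) \leq d - 1 + h^A_1(\bk) = d$ for all $d \geq 2$, and this also holds for $d=0,1$ by inspection. In other words $\mathrm{Tor}^A_{n,d}(\bk,\bk) = 0$ whenever $n > d$.

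For the opposite inequality I would invoke the standard fact that for an $\bN$-graded, connected $\bk$-algebra (i.e.\ $A(0) = \bk$), the bar resolution computing $\mathrm{Tor}^A_{*,*}(\bk,\bk)$ has its homological degree $n$ part concentrated in internal gradings $\geq n$; equivalently, taking a minimal free resolution $F_\bullet \to \bk$ one inductively sees that the generators of $F_n$ lie in grading $\geq n$. This forces $\mathrm{Tor}^A_{n,d}(\bk,\bk) = 0$ for $d < n$. Combining the two inequalities yields $\mathrm{Tor}^A_{n,d}(\bk,\bk)=0$ whenever $n \neq d$, which is exactly the Koszul property.

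There is no real obstacle here beyond correctly identifying $h^A_1(\bk)=1$; the rest is mechanical once Lemma \ref{lem:RegA} is in hand.
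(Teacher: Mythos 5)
Your proof is correct and takes essentially the same route as the paper: both apply Lemma \ref{lem:RegA} with $M=\bk$ (using $h^A_1(\bk)=1$) to kill $\mathrm{Tor}^A_{n,d}(\bk,\bk)$ for $n>d$, and both use the (reduced) bar complex's connectivity to kill it for $n<d$. The only difference is that you spell out the identification $H^A_{1,*}(\bk)\cong I/I^2\cong\bk\{P\}$ in grading $1$, which the paper takes as read.
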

\begin{proof}
Applying Lemma \ref{lem:RegA} with $M=\bk$, which has $h_1^A(\bk)=1$, shows that $\mathrm{Tor}^A_{n,d}(\bk, \bk)=0$ for $d < n$. On the other hand, using the reduced bar complex to calculate these $\mathrm{Tor}$-groups shows that $\mathrm{Tor}^A_{*,d}(\bk, \bk)$ is a subquotient of $I^{\otimes d}$, for $I = \mathrm{Ker}(\epsilon : A \to \bk)$ the augmentation ideal. As $I$ is supported in gradings $\geq 1$, $I^{\otimes d}$ is supported in gradings $\geq d$, so $\mathrm{Tor}^A_{n,d}(\bk, \bk)=0$ for $n < d$.
\end{proof}

\subsection{Regularity for $\overline{\gR}$}

Write $\gA$ for $A$ considered as a discrete unital $E_2$-algebra in $\mathsf{sMod}_\bk^\bN$, and $\gI$ for its augmentation ideal. We can consider an $A$-module $M$ as an $\overline{\gR}$-module $\gM$, via the truncation map $\overline{\gR} \to \tau_{\leq 0} \overline{\gR} \simeq \gA$, and we can therefore define $\overline{\gR}$-module homology groups $H^{\overline{\gR}}_{n,d}(\gM) := H_{n,d}(\bk \otimes_{\overline{\gR}}^\bL \gM)$ as well as
$$h^{\overline{\gR}}_d(\gM) := \max\{n \in \bN \, | \, H^{\overline{\gR}}_{n,d}(\gM) \neq 0\}.$$

\begin{lem}\label{lem:AssIAlt}
Assumption (I) is equivalent to ``$H^{\overline{\gR}}_{n,d}(\bk)=0$ for $d < n$''.
\end{lem}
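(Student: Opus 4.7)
The plan is to identify both sides with the (bigraded) homology of the same bar construction, up to a single degree shift.

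First recall that for a non-unital $E_1$-algebra $\gR$ (and $\gR$ inherits such a structure from its $E_2$-structure), $E_1$-homology can be computed via the reduced bar construction: writing $B(\gR) := |[p] \mapsto \gR^{\otimes p}|$ with the standard face maps induced by the multiplication, one has
$$H^{E_1}_{n,d}(\gR) \cong H_{n,d-1}(B(\gR)),$$
the shift by one reflecting that derived $E_1$-indecomposables are a desuspension of the bar construction (see the corresponding discussion in \cite{e2cellsI}).

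Next, I would compute $\overline{\gR}$-module homology of $\bk$ via the two-sided bar complex $B_\bullet(\bk, \overline{\gR}, \bk)$, whose $p$-simplices are $\overline{\gR}^{\otimes p}$. Using the canonical splitting $\overline{\gR} = \bk \oplus \gR$ and eliminating the degenerate contributions of the unit, this decomposes as
$$\bk \otimes^\bL_{\overline{\gR}} \bk \simeq \bk \oplus B(\gR),$$
whence $H^{\overline{\gR}}_{0,0}(\bk) = \bk$ with no other classes in grading $n=0$, and
$$H^{\overline{\gR}}_{n,d}(\bk) \cong H_{n,d}(B(\gR)) \cong H^{E_1}_{n,d-1}(\gR) \quad \text{for } n \geq 1.$$

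Under this translation, assumption (I) --- that $H^{E_1}_{n,d}(\gR) = 0$ for $d < n-1$ --- becomes $H^{\overline{\gR}}_{n,d'}(\bk) = 0$ for $d' < n$ and $n \geq 1$. For $n=0$ the condition $d' < 0$ is automatic, and the unique nonvanishing class $H^{\overline{\gR}}_{0,0}(\bk) = \bk$ poses no obstruction since $0 \not< 0$. The main step is really just locating the correct degree-shift convention for the bar construction in \cite{e2cellsI}; once that convention is pinned down, the lemma is essentially formal.
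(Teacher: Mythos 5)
Your proposal is correct and follows essentially the same route as the paper: both identify $\bk \otimes^{\bL}_{\overline{\gR}} \bk$ with $\bk$ plus (a one-degree shift of) the derived $E_1$-indecomposables via the bar construction, and then conclude by a degree count. The paper simply spells out the precise citations from \cite{e2cellsI} (Lemma~13.5, Theorem~13.7, Proposition~12.9) for the chain of equivalences $B(\bk,\overline{\gR},\bk) \simeq B^{E_1}(\overline{\gR},\epsilon) \simeq B^{E_1}(\gR^+,\epsilon) \simeq \bk \oplus \tilde{B}^{E_1}(\gR) \simeq \bk \oplus \Sigma Q^{E_1}_\bL(\gR)$ that you describe more informally.
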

\begin{proof}
Use the equivalences $B^{E_1}({\gR}^+,\epsilon) \simeq \bk \oplus \tilde{B}^{E_1}({\gR})$ from \cite[Lemma 13.5]{e2cellsI} and $\tilde{B}^{E_1}({\gR}) \simeq \Sigma Q^{E_1}_\bL(\gR)$ \cite[Theorem 13.7]{e2cellsI}, and note that there are equivalences
$$B^{E_1}({\gR}^+,\epsilon)\simeq B^{E_1}(\overline{\gR},\epsilon)\simeq B(\bk, \overline{\gR}, \bk)$$
where the first is induced by \cite[Proposition 12.9]{e2cellsI} and the second is induced by the semi-simplicial map $B^{E_1}_\bullet(\overline{\gR},\epsilon)\to B_\bullet(\bk, \overline{\gR}, \bk)$ induced on $p_1$-simplices by the equivalence $\mathcal{P}_1(p_1) \overset{\sim}\to \{*\}$.
\end{proof}

\begin{lem}\label{lem:RHomOfA}
We have $H^{\overline{\gR}}_{n,d}(\gA)=0$ for $d < n-1$, and for $d < 2$ with the exception $H^{\overline{\gR}}_{0,0}(A)=\bk$.
\end{lem}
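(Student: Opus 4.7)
The plan is to combine two long exact sequences of $\overline{\gR}$-modules with the vanishing from Lemma~\ref{lem:AssIAlt}.

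First, for the range $d < 2$, I would apply $\bk \otimes^\bL_{\overline{\gR}}(-)$ to the exact triangle $\tau_{\geq 1}\overline{\gR} \to \overline{\gR} \to \gA$ of $\overline{\gR}$-modules. Since $H^{\overline{\gR}}_{*,*}(\overline{\gR}) = \bk$ is concentrated in bidegree $(0,0)$, and since $\pi_0\tau_{\geq 1}\overline{\gR} = 0$ forces $H^{\overline{\gR}}_{n,0}(\tau_{\geq 1}\overline{\gR}) = 0$ for all $n$, the resulting long exact sequence yields directly $H^{\overline{\gR}}_{n,0}(\gA) = 0$ for $n > 0$, $H^{\overline{\gR}}_{0,0}(\gA) = \bk$, and $H^{\overline{\gR}}_{n,1}(\gA) = 0$ for all $n$.

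For the range $d < n-1$, I would pick a cycle $\hat\sigma \in \overline{\gR}(1)$ lifting $\sigma \in A(1)$ and consider the short exact sequence of $\overline{\gR}$-modules
$$0 \to S^{1,0} \otimes \gA \xrightarrow{\hat\sigma \cdot} \gA \to \gA/\sigma\gA \to 0,$$
injectivity being clear from the description of $A$: multiplication by $\sigma$ is the inclusion $\bk \hookrightarrow \bk\{P\} = A(1)$ on $A(0)$ and an isomorphism $A(n) \overset{\sim}{\to} A(n+1)$ for $n \geq 1$. Hence $\gA/\sigma\gA \cong \bk \oplus S^{1,0}\otimes \bk\{P\}/\bk\{\sigma\}$, a trivial $\overline{\gR}$-module whose $\overline{\gR}$-homology vanishes for $d < n-1$ by Lemma~\ref{lem:AssIAlt}. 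In that range the long exact sequence then displays $H^{\overline{\gR}}_{n,d}(\gA)$ as a quotient of $H^{\overline{\gR}}_{n-1,d}(\gA)$ via $\sigma$-multiplication, setting up an induction on $n$ with base cases supplied by the previous paragraph.

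The hardest step is the boundary case $d = n-2$: the naive induction closes only for $d < n-2$, because at $d = n-2$ the source $H^{\overline{\gR}}_{n-1,n-2}(\gA)$ sits at its own boundary and is not forced to vanish. To bridge the gap I would supplement the $\sigma$-sequence with the spectral sequence
$$E^1_{p,q}(n) = H^{\overline{\gR}}_{n-p, q}(\gA) \otimes V_p \Longrightarrow H^{\overline{\gR}}_{n, p+q}(\bk)$$
obtained by viewing the Koszul resolution $K_\bullet = A \otimes V_\bullet$ of $\bk$ over $A$ (with $V_p$ in internal grading $p$, which exists by Corollary~\ref{cor:AIsKoszul}) as an $\overline{\gR}$-module resolution via the truncation $\overline{\gR} \to A$. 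By Lemma~\ref{lem:AssIAlt} the abutment vanishes for $p+q < n$, and the inductive hypothesis makes $E^1_{p,q}(n) = 0$ whenever $p \geq 1$ and $p+q < n-1$. Identifying $d_1 : E^1_{1,q} \to E^1_{0,q}$ with multiplication by $V_1 = \bk\{P\}$ on $H^{\overline{\gR}}(\gA)$ and invoking the $\sigma$-surjectivity already established should kill the remaining boundary contribution at $d = n-2$, closing the argument.
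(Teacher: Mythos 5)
Your proof of the $d<2$ part is fine and essentially equivalent to the paper's Hurewicz argument: the triangle $\tau_{\geq 1}\overline{\gR} \to \overline{\gR} \to \gA$ gives exactly what you say. But the $d < n-1$ part has a genuine gap.

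The crucial fact you are missing is that the map $\gA \otimes S^{1,0} \overset{-\cdot\sigma}{\to} \gA$ becomes \emph{nullhomotopic} after applying $\bk \otimes^\bL_{\overline{\gR}}(-)$. This uses that $\gA$ is $E_2$ (indeed commutative), so right multiplication by $\sigma$ agrees with left multiplication by $\sigma$; and $\sigma$ lifts to $H_{1,0}(\overline{\gR})$, so over the bar construction one can move $\sigma$ to act on the left factor $\bk$, where it acts by zero. This turns the long exact sequence for $S^{1,0}\otimes\gA \to \gA \to \gA/\sigma$ into \emph{short} exact sequences
$$0 \lra H^{\overline{\gR}}_{n,d}(\gA) \lra H^{\overline{\gR}}_{n,d}(\gA/\sigma) \lra H^{\overline{\gR}}_{n-1,d-1}(\gA) \lra 0,$$
so $H^{\overline{\gR}}_{n,d}(\gA)$ \emph{injects} into $H^{\overline{\gR}}_{n,d}(\gA/\sigma)$ and the vanishing of the latter for $d<n-1$ finishes the proof in one step, with no induction. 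Your version keeps the full long exact sequence and only extracts surjectivity $H^{\overline{\gR}}_{n-1,d}(\gA) \twoheadrightarrow H^{\overline{\gR}}_{n,d}(\gA)$, and as you correctly notice this cannot close the boundary case $d=n-2$: iterating drives you down to $H^{\overline{\gR}}_{d+1,d}(\gA)$, which the $d<2$ base only controls for $d\leq 1$. (For $d=2$ this is precisely the quantity $H^{\overline{\gR}}_{3,2}(\gA)$ that the paper later takes as the extra \emph{hypothesis} (III) of Theorem~\ref{thm:StabGeneric}; so it certainly cannot be proved from (I) and (II) alone by surjectivity.)

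Your proposed patch via the Koszul resolution does not repair this. In that spectral sequence, showing that $d^1 : E^1_{1,n-2} \to E^1_{0,n-2}$ is surjective proves $E^2_{0,n-2}=0$; but that is a statement about the $E^2$-page and tells you nothing about $E^1_{0,n-2}=H^{\overline{\gR}}_{n,n-2}(\gA)$ itself. Indeed, the vanishing of $E^\infty_{0,n-2}$ is automatic from convergence to $H^{\overline{\gR}}_{n,n-2}(\bk)=0$, so no amount of differential analysis can force the $E^1$-term to vanish; you would need the $d^1$ (and all higher differentials) into that spot to be \emph{zero}, not surjective, which is again the nullhomotopy statement you are missing. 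Finally, a minor inaccuracy: $\gA/\sigma$ is \emph{not} a trivial $\overline{\gR}$-module. It sits in a homotopy cofibre sequence $S^{1,0}\otimes\bk\{P\}/\bk\{\sigma\} \to \gA/\sigma \to S^{0,0}\otimes\bk$ with trivial outer module structures but nontrivial extension (the degree-$1$ part of $\overline{\gR}$ acts nontrivially from $(\gA/\sigma)(0)$ to $(\gA/\sigma)(1)$); this does not affect the bound on $H^{\overline{\gR}}_{n,d}(\gA/\sigma)$, but the direct sum decomposition you assert is not correct.
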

\begin{proof}
Consider the homotopy cofibre sequence of left $\gA$-modules, and hence of $\overline{\gR}$-modules,
$\gA \otimes S^{1,0} \overset{- \cdot \sigma}\to \gA \to \gA/\sigma$. As $\gA$ is $E_2$ (and in fact commutative), the left-hand map is nullhomotopic on $\overline{\gR}$-module indecomposables, giving short exact sequences
$$0 \lra H^{\overline{\gR}}_{n,d}(\gA) \lra H^{\overline{\gR}}_{n,d}(\gA/\sigma) \lra H^{\overline{\gR}}_{n-1,d-1}(\gA) \lra 0.$$
As in the proof of Lemma \ref{lem:RegA} there is a homotopy cofibre sequence of left $\overline{\gR}$-modules $S^{1,0} \otimes \bk\{P\}/\bk\{\sigma\} \to \gA/\sigma \to S^{0,0} \otimes \bk$, where the module structure on the outer terms is via the augmentation. Together with the fact that $H^{\overline{\gR}}_{n,d}(\bk)=0$ for $d < n$, which follows from assumption (I) via Lemma \ref{lem:AssIAlt}, it follows that $H^{\overline{\gR}}_{n,d}(\gA/\sigma)=0$ for $d < n-1$, and so the same holds for $H^{\overline{\gR}}_{n,d}(\gA)$.

The second claim follows from the fact that $\overline{\gR} \to \gA$ is 1-connected in each grading, as it is the 0-truncation, so the Hurewicz theorem \cite[Corollary 11.12]{e2cellsI} shows that $H^{\overline{\gR}}_{n,d}(\gA, \overline{\gR})=0$ for $d \leq 1$.
\end{proof}

\begin{lem}\label{lem:RModREgularityEstimates} 
Let $\gM$ be an $\overline{\gR}$-module which is discrete.
\begin{enumerate}[(i)]
\item We have $h^{\overline{\gR}}_2(\gM) \leq \max\{3+h^{\overline{\gR}}_0(\gM), 1+h^{\overline{\gR}}_1(\gM)\}$, and for $d \geq 3$ we have $h^{\overline{\gR}}_d(\gM) \leq d+1 + \max\{h^{\overline{\gR}}_0(\gM), h^{\overline{\gR}}_1(\gM)-1\}$.

\item If $H^{\overline{\gR}}_{3,2}(\gA)=0$, then we have $h^{\overline{\gR}}_2(\gM) \leq 2+\max\{h^{\overline{\gR}}_0(\gM), h^{\overline{\gR}}_1(\gM)-1\}$.

\item In in addition $H^{\overline{\gR}}_{4,3}(\gA)=0$, then we have $h^{\overline{\gR}}_d(\gM) \leq d+\max\{h^{\overline{\gR}}_0(\gM), h^{\overline{\gR}}_1(\gM)-1\}$ for $d \leq 3$.
\end{enumerate}
\end{lem}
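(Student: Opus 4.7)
The plan is to resolve $\gM$ over $\gA$ and extract an auxiliary spectral sequence. Pick a minimal simplicial free $\bN$-graded $A$-module resolution $F_\bullet \to \gM$, so that $F_p = \bigoplus_i S^{n_{p,i},0} \otimes \gA$ with $\max_i n_{p,i} = h^A_p(\gM)$ (obtained by the graded Nakayama lemma applied inductively to successive syzygies). Regard each $F_p$ as an $\overline{\gR}$-module via $\overline{\gR} \to \gA$ and apply $\bk \otimes^{\bL}_{\overline{\gR}} -$; the resulting simplicial object has realization $\simeq \bk \otimes^{\bL}_{\overline{\gR}} \gM$, and the associated simplicial filtration gives a first-quadrant spectral sequence
\[ E^1_{p,q,n} = H^{\overline{\gR}}_{n,q}(F_p) = \bigoplus_i H^{\overline{\gR}}_{n - n_{p,i},\, q}(\gA) \;\Longrightarrow\; H^{\overline{\gR}}_{n,\, p+q}(\gM), \]
with $d^1$ the alternating sum of face maps and $E^2_{p,0,*} = H^A_{p,*}(\gM)$.

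By Lemma~\ref{lem:RHomOfA} the group $H^{\overline{\gR}}_{n',q}(\gA)$ vanishes unless $(q,n') = (0,0)$ or $q \geq 2$ with $n' \leq q+1$, so the $E^{\geq 1}$-page is supported in row $q = 0$ and rows $q \geq 2$. Since row $q = 1$ is empty on $E^2$, no $E^{\geq 2}$-differential can enter or leave position $(1,0)$, and the only contribution to total degree $1$ is $E^\infty_{1,0,n} = E^2_{1,0,n} = H^A_{1,n}(\gM)$; hence $h^{\overline{\gR}}_1(\gM) = h^A_1(\gM)$, and $h^{\overline{\gR}}_0(\gM) = h^A_0(\gM)$ is immediate since $\gM$ is discrete. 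These identifications are what convert the $A$-module regularity of Lemma~\ref{lem:RegA} into the $\overline{\gR}$-module bounds we seek.

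For part (i), $H^{\overline{\gR}}_{n,d}(\gM)$ is a subquotient of $\bigoplus_{p+q=d,\, q \neq 1} E^1_{p,q,n}$. The row-$0$ term is bounded in grading by $h^A_d(\gM) \leq d-1 + h^A_1(\gM)$ (Lemma~\ref{lem:RegA}), while for $q \geq 2$ the $(p,q)$-term vanishes once $n > h^A_p(\gM) + q + 1 = h^A_p(\gM) + d - p + 1$. Treating the cases $p=0$, $p=1$, and $p \geq 2$ (invoking Lemma~\ref{lem:RegA} again in the last) and taking the maximum yields $\max\{h^{\overline{\gR}}_0(\gM) + d + 1,\, h^{\overline{\gR}}_1(\gM) + d\}$, matching (i). Parts (ii) and (iii) follow by the same bookkeeping: $H^{\overline{\gR}}_{3,2}(\gA) = 0$ (resp.\ $H^{\overline{\gR}}_{4,3}(\gA) = 0$) tightens the support of $H^{\overline{\gR}}_{*,q}(\gA)$ in row $q = 2$ (resp.\ $q = 3$) from $n' \leq q+1$ to $n' \leq q$, which shaves the $(0,2)$ contribution (and, in (iii), also the $(1,2)$ and $(0,3)$ contributions) by $1$. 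The main subtle point is the identification $h^{\overline{\gR}}_1(\gM) = h^A_1(\gM)$ via the isolation of $(1,0)$ on the $E^2$-page; once that is in hand, everything else reduces to routine spectral-sequence bookkeeping.
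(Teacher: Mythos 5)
Your proof is correct and takes a genuinely different route from the paper's. Both arguments hinge on the change-of-rings identification $\bk \otimes^\bL_{\overline{\gR}} \gM \simeq (\bk \otimes^\bL_{\overline{\gR}} \gA) \otimes^\bL_\gA \gM$ together with the vanishing ranges supplied by Lemmas \ref{lem:RHomOfA} and \ref{lem:RegA}, but you organise the computation by filtering along the skeletal degree of a free $A$-module resolution of $\gM$, whereas the paper instead filters $\bk \otimes^\bL_{\overline{\gR}} \gA$ by its internal $\bN$-grading and then approximates the resulting $E^1$-page by a K\"{u}nneth spectral sequence. Your version is more explicit: since each $F_p$ is a direct sum of shifted copies of $\gA$, your $E^1$-page is literally a sum of shifts of $H^{\overline{\gR}}_{*,*}(\gA)$, so no K\"{u}nneth argument is needed and the row-by-row bookkeeping in (ii) and (iii) is transparent; the paper's version has the advantage of not choosing a resolution. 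One small caveat: the paper works over an arbitrary commutative ring $\bk$, and your invocation of minimality (``$\max_i n_{p,i} = h^A_p(\gM)$'' via graded Nakayama) really needs $\bk$ to be a field. This is easily repaired: over general $\bk$ one can still construct a free resolution with $F_p$ generated in gradings $\leq \max_{j \leq p} h^A_j(\gM)$, and running your estimate with this weaker bound gives $h^{\overline{\gR}}_d(\gM) \leq \max_{j+q \leq d, q \neq 1}\{h^A_j(\gM) + h^{\overline{\gR}}_q(\gA)\}$, which is exactly the paper's estimate and yields the same conclusions in (i)--(iii). With that adjustment the argument is complete.
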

\begin{proof}
As $\gM$ is discrete, the $\overline{\gR}$-module structure on it is restricted along $\overline{\gR} \to \gA$. Thus we have an equivalence
$$\bk \otimes_{\overline{\gR}}^\bL \gM \simeq (\bk \otimes_{\overline{\gR}}^\bL \gA) \otimes^\bL_\gA \gM.$$
We may filter the right $\gA$-module $\bk \otimes_{\overline{\gR}}^\bL \gA$ by its grading: the associated graded may be identified with $\bk \otimes_{\overline{\gR}}^\bL \gA$, but the $\gA$-module structure is now via the augmentation $\epsilon: \gA \to \bk$. It gives a spectral sequence
$$E^1_{n,p,q} = H_p((\bk \otimes_{\overline{\gR}}^\bL \gA)(q) \otimes^\bL_\bk (\bk \otimes_\gA^\bL \gM)(n-q)) \Longrightarrow H_{n,p}(\bk \otimes^\bL_{\overline{\gR}} \gM) = H^{\overline{\gR}}_{n,p}(\gM).$$

By the second part of Lemma \ref{lem:RHomOfA} we have that $H^{\overline{\gR}}_{n,d}(\gA)$ vanishes for $d \leq 1$ except that it is $\bk$ in bidgree $(0,0)$. From the K{\"u}nneth theorem we then see that
\begin{equation}\label{eq:RHomologyIsAHomology}
H^{\overline{\gR}}_{n,0}(\gM) \cong H^{\gA}_{n,0}(\gM)\quad\text{and}\quad H^{\overline{\gR}}_{n,1}(\gM) \cong H^{\gA}_{n,1}(\gM),
\end{equation}
so $h^{\overline{\gR}}_d(\gM)=h^{\gA}_d(\gM)$ for $d \leq 1$. More generally, we may approximate the $E^1$-page using the K{\"u}nneth spectral sequence
$$F^2_{u,v} = \bigoplus_{v'+v'' = v} \mathrm{Tor}^\bk_{u}(H^{\overline{\gR}}_{q,v'}(\gA), H^\gA_{n-q, v''}(\gM)) \Longrightarrow E^1_{n,u+v,q}.$$ 
Thus we find that
$$h^{\overline{\gR}}_{d}(\gM) \leq \max_{v'+v''\leq d}\{h^{\overline{\gR}}_{v'}(\gA) + h^{\gA}_{v''}(\gM)\}.$$
Using that
\begin{enumerate}[(i)]
\item $h^\gA_{v''}(\gM) \leq v''-1 + h_1^\gA(\gM) = v''-1 + h^{\overline{\gR}}_1(\gM)$ for $v'' \geq 2$, by Lemma \ref{lem:RegA}, and
\item $h^{\overline{\gR}}_{0}(\gA)=0$, $h^{\overline{\gR}}_{1}(\gA)=-\infty$, and $h^{\overline{\gR}}_{v'}(\gA) \leq v'+1$ for $v' \geq 2$, by Lemma \ref{lem:RHomOfA},
\end{enumerate}
we find the claimed estimates.

Under the additional assumption $h^{\overline{\gR}}_{2}(\gA) \leq 2$ we find the slightly improved second estimate. Adding the further assumption $h^{\overline{\gR}}_{3}(\gA) \leq 3$ gives the third estimate.
\end{proof}

The following can be used to verify the hypotheses in Lemma \ref{lem:RModREgularityEstimates} (ii) and (iii).

\begin{lem}\label{lem:CheckHyp3and4}\mbox{}
\begin{enumerate}[(i)]
\item If the map $H^{\overline{\gR}}_{3,3}(\bk) \to H^{\gA}_{3,3}(\bk)$ is surjective then $H_{3,2}^{\overline{\gR}}(\gA)=0$.

\item If in addition the map $H^{\overline{\gR}}_{4,4}(\bk) \to H^{\gA}_{4,4}(\bk)$ is surjective then $H_{4,3}^{\overline{\gR}}(\gA)=0$.

\end{enumerate}
\end{lem}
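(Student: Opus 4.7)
The plan is to exploit the cofibre sequence of $\gA$-modules $S^{1,0} \otimes (P/\sigma) \to \gA/\sigma \to \bk$ and the short exact sequence $0 \to H^{\overline{\gR}}_{n,d}(\gA) \to H^{\overline{\gR}}_{n,d}(\gA/\sigma) \to H^{\overline{\gR}}_{n-1,d-1}(\gA) \to 0$ from the proof of Lemma \ref{lem:RHomOfA}, and to compare the resulting long exact sequence with its analogue for $\gA$-module homology via naturality under $\overline{\gR} \to \gA$.

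For (i), applying $\bk \otimes^{\bL}_{\overline{\gR}} -$ to the cofibre sequence yields a long exact sequence. Using $H^{\overline{\gR}}_{3,2}(\bk) = 0$ (Assumption (I) via Lemma \ref{lem:AssIAlt}) together with $H^{\overline{\gR}}_{2,1}(\gA) = 0$ (Lemma \ref{lem:RHomOfA}), the short exact sequence at bidegree $(3,2)$ gives
\[
H^{\overline{\gR}}_{3,2}(\gA) \;\cong\; H^{\overline{\gR}}_{3,2}(\gA/\sigma) \;\cong\; \operatorname{coker}\bigl(\partial \colon H^{\overline{\gR}}_{3,3}(\bk) \to (P/\sigma) \otimes H^{\overline{\gR}}_{2,2}(\bk)\bigr).
\]
Running the same computation with $\gA$ in place of $\overline{\gR}$, and using $H^{\gA}_{n,d}(\gA) = 0$ for $d \geq 1$, shows the analogous $\partial_{\gA}$ is surjective. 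Naturality of the long exact sequence under $\overline{\gR} \to \gA$ assembles $\partial$ and $\partial_{\gA}$ into a commutative square whose verticals are the comparison maps $H^{\overline{\gR}}_{*,*}(\bk) \to H^{\gA}_{*,*}(\bk)$; combining the surjectivity hypothesis at $(3,3)$ with a diagram chase then yields surjectivity of $\partial$, hence $H^{\overline{\gR}}_{3,2}(\gA) = 0$.

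For (ii), the same scheme applies at bidegree $(4,3)$: the vanishing from (i) together with the short exact sequence give $H^{\overline{\gR}}_{4,3}(\gA) \cong H^{\overline{\gR}}_{4,3}(\gA/\sigma)$, which the analogous long exact sequence identifies with $\operatorname{coker}\bigl(\partial' \colon H^{\overline{\gR}}_{4,4}(\bk) \to (P/\sigma) \otimes H^{\overline{\gR}}_{3,3}(\bk)\bigr)$, reducing the vanishing to the new surjectivity hypothesis at $(4,4)$ fed through the same diagram chase.

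The substantive difficulty is the final diagram chase: a naive five-lemma argument would require the right-hand vertical of the comparison square---which tensors the comparison map $H^{\overline{\gR}}_{n-1,n-1}(\bk) \to H^{\gA}_{n-1,n-1}(\bk)$ with $P/\sigma$---to be injective, which is not automatic from the given hypotheses. Overcoming this step is the real content of the lemma, and will likely require either exploiting Koszulity of $\gA$ (Corollary \ref{cor:AIsKoszul}) together with the change-of-rings spectral sequence of the map $\overline{\gR} \to \gA$ to constrain the kernel of the comparison map below the diagonal, or reinterpreting $\partial$ as multiplication by explicit classes in the natural $E_1$-algebra structure on $H^{\overline{\gR}}_{*,*}(\bk)$ so that its image is controlled directly by the surjectivity hypothesis together with the algebra structure.
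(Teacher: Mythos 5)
Your reduction of (i) to surjectivity of the connecting map $\partial \colon H^{\overline{\gR}}_{3,3}(\bk) \to (\bk\{P\}/\bk\{\sigma\}) \otimes H^{\overline{\gR}}_{2,2}(\bk)$ is correct, and you have correctly put your finger on where it stalls: the diagram chase needs the comparison map $H^{\overline{\gR}}_{2,2}(\bk) \to H^{\gA}_{2,2}(\bk)$ to be injective, and nothing in the hypotheses gives this. Translating via $H^{\overline{\gR}}_{n,d}(\bk) \cong H^{E_1}_{n,d-1}(\gR)$ and $H^{\gA}_{n,d}(\bk) \cong H^{E_1}_{n,d-1}(\gI)$ (with $\gI$ the augmentation ideal of $\gA$), that injectivity amounts to the connecting map $H^{E_1}_{2,2}(\gI,\gR)\to H^{E_1}_{2,1}(\gR)$ being zero; but the truncation $\gR\to\gI$ is only $1$-connected, so $(2,2)$-cells are exactly what one expects to attach, and there is no reason for $H^{E_1}_{2,2}(\gI,\gR)$ to vanish or for that connecting map to be trivial. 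Neither of your two proposed remedies is developed far enough to close this gap.

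The paper avoids the problem by changing which exact sequence carries the hypothesis. Instead of comparing two copies of the $\gA/\sigma$-cofibre long exact sequence across $\overline{\gR}\to\gA$, it works with the long exact sequence of the \emph{pair} $(\gI,\gR)$ in $E_1$-homology. There the hypothesis of (i) reads as surjectivity of $H^{E_1}_{3,2}(\gR)\to H^{E_1}_{3,2}(\gI)$, which together with $H^{E_1}_{3,1}(\gR)=0$ (assumption (I)) gives $H^{E_1}_{3,2}(\gI,\gR)=0$ directly from exactness, with no auxiliary injectivity required: a pair's LES already packages the kernel/cokernel bookkeeping that your commutative-square comparison leaves loose. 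Passing to $E_2$-homology via the bar construction and then invoking \cite[Theorem 15.9]{e2cellsI} to identify $H^{\overline{\gR}}_{3,2}(\gA,\overline{\gR})$ with $H^{E_2}_{3,2}(\gI,\gR)$ then gives the vanishing; part (ii) is the same argument one bidegree higher. That comparison theorem is the tool replacing your missing injectivity, and without it (or some genuine substitute exploiting the $E_1$-algebra structure, which you gesture at but do not supply) your approach does not close.
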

\begin{proof}
Truncation gives a map of non-unital $E_2$-algebras $\gR \to \gI$, which is 1-connected in each grading, i.e.\ $\gI$ can be obtained from $\gR$ by attaching ($E_1$-)cells of dimension $\geq 2$, and so $H^{E_1}_{n,d}(\gI, \gR)=0$ for $d < 2$. In addition $H^{E_1}_{n,d}(\gR)=0$ for $d < n-1$ by assumption.

Recall from the proof of Lemma \ref{lem:AssIAlt} that $H^{\overline{\gR}}_{n,d}(\bk) \cong H_{n,d-1}^{E_1}(\gR)$ for $n>0$, and by the same argument $H^{\gA}_{n,d}(\bk) \cong H_{n,d-1}^{E_1}(\gI)$. From the latter and Corollary \ref{cor:AIsKoszul} it follows that $H^{E_1}_{n,d}(\gI)=0$ for $d < n-1$ (in fact for $d \neq n-1$). The long exact sequence for the pair $(\gI, \gR)$ on $E_1$-homology therefore shows that $H^{E_1}_{n,d}(\gI, \gR)=0$ for $d < n-1$. 

In addition this sequence contains the portion
$$H_{3,2}^{E_1}(\gR) \lra H_{3,2}^{E_1}(\gI) \lra H_{3,2}^{E_1}(\gI, \gR) \lra H_{3,1}^{E_1}(\gR)=0.$$
The assumption in part (i) of this lemma is that the left-hand map is surjective, showing that $H_{3,2}^{E_1}(\gI, \gR)=0$. Similarly there is a portion 
$$H_{4,3}^{E_1}(\gR) \lra H_{4,3}^{E_1}(\gI) \lra H_{4,3}^{E_1}(\gI, \gR) \lra H_{4,2}^{E_1}(\gR)=0$$
and the assumption in part (ii) of this lemma is that the left-hand map is surjective, showing that $H_{4,3}^{E_1}(\gI, \gR)=0$.

As $E_2$-homology may be calculated from $E_1$-homology by a bar construction (see \cite[Theorem 14.4]{e2cellsI}) we find that $H^{E_1}_{n,d}(\gR) \to H^{E_2}_{n,d}(\gR)$ is surjective for $d \leq n-1$, and similarly for $\gI$, from which it follows that $H^{E_2}_{n,d}(\gI, \gR)=0$ for $d < n-1$, and that on the line $d=n-1$ there is an epimorphism
$$H^{E_1}_{n,n-1}(\gI, \gR) \lra H^{E_2}_{n,n-1}(\gI, \gR).$$
In particular $H_{2,1}^{E_2}(\gI, \gR)$ vanishes, and $H_{3,2}^{E_2}(\gI, \gR)$ and $H_{4,3}^{E_2}(\gI, \gR)$ also vanish under the assumptions in parts (i) and (ii).

We apply \cite[Theorem 15.9]{e2cellsI} with $\rho(n)=n$ and 
$$\sigma(n) = \begin{cases}
1 & n=1\\
2 & n=2\\
3 & n=3 \text{ and the hypothesis of (i) holds}\\
4 & n=4 \text{ and the hypothesis of (ii) holds}\\
n-1 & \text{else},
\end{cases}$$
showing that there is a map
$$H^{\overline{\gR}}_{n,d}(\gA, \overline{\gR}) \lra H^{E_2}_{n,d}(\gI, \gR)$$
which is an isomorphism for $d < (\sigma *\sigma)(n)$. If the hypothesis of (i) holds then $(\sigma * \sigma)(3) = 3$ and so $H^{\overline{\gR}}_{3,2}(\gA) = H^{\overline{\gR}}_{3,2}(\gA, \overline{\gR}) \overset{\sim}\to H^{E_2}_{3,2}(\gI, \gR)=0$. If in addition the hypothesis of (ii) holds then $(\sigma*\sigma)(4)=4$ and so $H^{\overline{\gR}}_{4,3}(\gA) = H^{\overline{\gR}}_{4,3}(\gA, \overline{\gR}) \overset{\sim}\to H^{E_2}_{4,3}(\gI, \gR)=0$.
\end{proof}

\subsection{The $A$-module part of Theorem \ref{thm:StabGeneric}}


We consider $\bk \simeq \bk \otimes^\bL_{\overline{\gR}} \overline{\gR}$, and give the left $\overline{\gR}$-module $\overline{\gR}$ its Postnikov filtration. This leads to a spectral sequence
$$E^2_{n,s,t} = H_{n,s}^{\overline{\gR}}(H_{*,t}(\overline{\gR})) \Longrightarrow H_{n,t+s}(\bk)$$
converging very nearly to zero, with differentials $d^r : E^r_{n,s,t} \to E^r_{n, s-r, t+r-1}$. We have $H_{*,0}(\overline{\gR})=A$, and so $E^1_{*,s,0}=H^{\overline{\gR}}_{*,s}(\gA)$, which is supported in degrees $\leq s+1$ by Lemma \ref{lem:RHomOfA}. Furthermore, it vanishes for $s=1$ and is $\bk$ for $s=0$. We consider the following chart depicting (upper bounds for) the supports of the terms in this spectral sequence.

\begin{figure}[h]
\begin{sseqdata}[ name = BarSS, homological Serre grading, classes = {draw = none }, x range = {0}{4}, y range = {0}{1}, xscale = 2, yscale=0.9, y axis gap = 35pt]

\class["0"](0,0)
\class["-\infty"](1,0)
\class["\leq 3"](2,0)
\class["\leq 4"](3,0)
\class["\leq 5"](4,0)
\class["\leq 6"](5,0)

\class["{h_0^{\overline{\gR}}(H_{*,1}({\overline{\gR}}))}"](0,1)
\class["{h_1^{\overline{\gR}}(H_{*,1}({\overline{\gR}}))}"](1,1)

\d2(2,0)
\d2(3,0)

\end{sseqdata}
\printpage[ name = BarSS, page = 2 ]
\end{figure}

As the spectral sequence converges to zero in positive total degrees, the two indicated $d^2$-differentials must be epimorphisms. This shows that $h_0^{\overline{\gR}}(H_{*,1}({\overline{\gR}})) \leq 3$ and $h_1^{\overline{\gR}}(H_{*,1}({\overline{\gR}})) \leq 4$, so by Lemma \ref{lem:RModREgularityEstimates} (i) we also have
$$h_s^{\overline{\gR}}(H_{*,1}({\overline{\gR}})) \leq 4+s \text{ for } s \geq 2.$$
This allows us to fill in the 1-row. Now we consider the differentials entering the first two positions of the 2-row:

\begin{figure}[h]
\begin{sseqdata}[ name = BarSS2, homological Serre grading, classes = {draw = none }, x range = {0}{4}, y range = {0}{2}, xscale = 2, yscale=0.9, y axis gap = 35pt]

\class["0"](0,0)
\class["-\infty"](1,0)
\class["\leq 3"](2,0)
\class["\leq 4"](3,0)
\class["\leq 5"](4,0)
\class["\leq 6"](5,0)

\class["\leq 3"](0,1)
\class["\leq 4"](1,1)
\class["\leq 6"](2,1)
\class["\leq 7"](3,1)
\class["\leq 8"](4,1)

\class["{h_0^{\overline{\gR}}(H_{*,2}({\overline{\gR}}))}"](0,2)
\class["{h_1^{\overline{\gR}}(H_{*,2}({\overline{\gR}}))}"](1,2)

\end{sseqdata}
\printpage[ name = BarSS2, page = 2---0 ]
\end{figure}

By considering all the differentials that can arrive at these groups, we see that $h_0^{\overline{\gR}}(H_{*,2}({\overline{\gR}})) \leq 6$ and $h_1^{\overline{\gR}}(H_{*,2}({\overline{\gR}})) \leq 7$, and Lemma \ref{lem:RModREgularityEstimates} (i) allows us to again complete this row by $h_s^{\overline{\gR}}(H_{*,2}({\overline{\gR}})) \leq 7+s$ for $s \geq 2$. In this way, we see that
$$
{h_0^{\overline{\gR}}(H_{*,t}({\overline{\gR}}))} \leq 3t, \quad
{h_1^{\overline{\gR}}(H_{*,t}({\overline{\gR}}))}  \leq 3t+1, \quad  {h_s^{\overline{\gR}}(H_{*,t}({\overline{\gR}}))}  \leq 3t+1+s \text{ for } s \geq 2.
$$
Using \eqref{eq:RHomologyIsAHomology} the same follows for $h_i^A(H_{*,d}({\overline{\gR}}))$. This proves the basic case.

Suppose now that $H_{3,2}^{\overline{\gR}}(\gA)=0$. This allows us to improve the start of the induction, as $E^1_{*,2,0}$ is supported in degrees $\leq 2$, and also allows us to use Lemma \ref{lem:RModREgularityEstimates} (ii). The same line of reasoning as above then leads to
\begin{align*}
{h_0^{A}(H_{*,t}({\overline{\gR}}))} \leq 3t-1, &\quad\quad
{h_1^{A}(H_{*,t}({\overline{\gR}}))}  \leq 3t+1, \\
{h_2^{A}(H_{*,t}({\overline{\gR}}))}  \leq 3t+2, & \quad\quad {h_s^{A}(H_{*,t}({\overline{\gR}}))}  \leq 3t+1+s \text{ for } s \geq 3,
\end{align*}
depicted as follows.

\begin{figure}[h]
\begin{sseqdata}[ name = BarSS3, homological Serre grading, classes = {draw = none }, x range = {0}{4}, y range = {0}{3}, xscale = 2, yscale=0.9, y axis gap = 35pt]

\class["0"](0,0)
\class["-\infty"](1,0)
\class["\leq 2"](2,0)
\class["\leq 4"](3,0)
\class["\leq 5"](4,0)
\class["\leq 6"](5,0)

\class["\leq 2"](0,1)
\class["\leq 4"](1,1)
\class["\leq 5"](2,1)
\class["\leq 7"](3,1)
\class["\leq 8"](4,1)

\class["\leq 5"](0,2)
\class["\leq 7"](1,2)
\class["\leq 8"](2,2)
\class["\leq 10"](3,2)
\class["\leq 11"](4,2)

\class["\leq 8"](0,3)
\class["\leq 10"](1,3)
\class["\leq 11"](2,3)
\class["\leq 13"](3,3)
\class["\leq 14"](4,3)

\end{sseqdata}
\printpage[ name = BarSS3, page = 2 ]
\end{figure}

Finally, suppose that in addition $H_{4,3}^{\overline{\gR}}(\gA)=0$. This allows us to further improve the start of the induction, as $E^1_{*,3,0}$ is supported in degrees $\leq 3$, and also allows us to use Lemma \ref{lem:RModREgularityEstimates} (iii). The same line of reasoning as above then leads to
\begin{equation*}
{h_s^{A}(H_{*,t}({\overline{\gR}}))} \leq 2t+s \text{ for } s \geq 1
\end{equation*}
depicted as follows.

\begin{figure}[h]
\begin{sseqdata}[ name = BarSS4, homological Serre grading, classes = {draw = none }, x range = {0}{4}, y range = {0}{3}, xscale = 2, yscale=0.9, y axis gap = 35pt]

\class["0"](0,0)
\class["-\infty"](1,0)
\class["\leq 2"](2,0)
\class["\leq 3"](3,0)
\class["\leq 5"](4,0)
\class["\leq 6"](5,0)

\class["\leq 2"](0,1)
\class["\leq 3"](1,1)
\class["\leq 4"](2,1)
\class["\leq 5"](3,1)
\class["\leq 7"](4,1)

\class["\leq 4"](0,2)
\class["\leq 5"](1,2)
\class["\leq 6"](2,2)
\class["\leq 7"](3,2)
\class["\leq 9"](4,2)

\class["\leq 6"](0,3)
\class["\leq 7"](1,3)
\class["\leq 8"](2,3)
\class["\leq 9"](3,3)
\class["\leq 11"](4,3)

\end{sseqdata}
\printpage[ name = BarSS4, page = 2 ]
\end{figure}

\subsection{The ordinary homological stability part of Theorem \ref{thm:StabGeneric}}

The remaining part of Theorem \ref{thm:StabGeneric} is the statement that $H_{n,d}(\overline{\gR}/\rho)=0$ for $d < \tfrac{n-1}{2}$ under assumptions (I), (II), and (III). (This cannot, of course, be deduced from the statement that the $A$-module $H_{*,d}(\gR)$ is generated in gradings $\leq 3d-1$ and presented in gradings $\leq 3d+1$, which we proved in the previous section under these assumptions.)

As in the proof of Lemma \ref{lem:CheckHyp3and4} we have $H_{n,d}^{E_2}(\gI, \gR)=0$ for $d < 2$, for $d < n-1$, and for $(n,d)=(3,2)$. Thus by \cite[Theorem 11.21]{e2cellsI} we can construct a minimal relative CW-approximation $\gR \to \gC \overset{\sim}\to \gI$, and thereby obtain a spectral sequence
$$E^1_{n,p,q} = H_{n,p+q,q}(\overline{\gR}/\rho[0] \otimes E_\infty^+(\bigoplus_{\alpha \in I} S^{n_\alpha, d_\alpha, d_\alpha})) \Longrightarrow H_{n,p+q}({\gA}/\rho)$$
with $d_\alpha \geq 2$, $d_\alpha \geq n_\alpha-1$, and $(n_\alpha, d_\alpha) \neq (3,2)$. The abutment vanishes for $p+q>0$ as well as for $n > 1$.

\begin{figure}[h]
\begin{sseqdata}[ name = cells, homological Serre grading, classes = {draw = none }, x range = {0}{6}, y range = {0}{5}, xscale = 1, yscale=0.7]

\class["*"](1,2)
\class["*"](2,2)

\class["*"](1,3)
\class["*"](2,3)
\class["*"](3,3)
\class["*"](4,3)

\class["*"](1,4)
\class["*"](2,4)
\class["*"](3,4)
\class["*"](4,4)
\class["*"](5,4)

\class["*"](1,5)
\class["*"](2,5)
\class["*"](3,5)
\class["*"](4,5)
\class["*"](5,5)
\class["*"](6,5)

\end{sseqdata}
\printpage[ name = cells, page = 2 ]
\caption{The $E_\infty$-cells of $E_\infty^+(\bigoplus_{\alpha \in I} S^{n_\alpha, d_\alpha, d_\alpha})$ are supported in the indicated bidegrees, with homological degree vertically and grading horizontally.}\label{fig:Supp}
\end{figure}

\begin{lem}\label{lem:VanRange}
 $H_{n'', d'', r}(E_\infty^+(\bigoplus_{\alpha \in I} S^{n_\alpha, d_\alpha, d_\alpha}))$ is non-trivial only if $(n'',d'',r)=(0,0,0)$ or if $d'' \geq r \geq 2$ and $d''-1 \geq \tfrac{1}{2} n''$.
\end{lem}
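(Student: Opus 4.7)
The plan is to filter $E_\infty^+(Y)$, where $Y := \bigoplus_{\alpha \in I} S^{n_\alpha, d_\alpha, d_\alpha}$, by the filtration induced from the skeletal filtration on $Y$, and then verify the claimed bounds on the associated graded piece-by-piece. A preliminary step is to check that every generating cell already satisfies $d_\alpha - 1 \geq \tfrac{1}{2} n_\alpha$. This is a case analysis from the three hypotheses $d_\alpha \geq 2$, $d_\alpha \geq n_\alpha - 1$, and $(n_\alpha, d_\alpha) \neq (3, 2)$: for $n_\alpha \in \{1, 2\}$ use $d_\alpha \geq 2$; for $n_\alpha = 3$ the exclusion forces $d_\alpha \geq 3$, giving $d_\alpha - 1 \geq 2 \geq \tfrac{3}{2}$; and for $n_\alpha \geq 4$ one gets $d_\alpha - 1 \geq n_\alpha - 2 \geq \tfrac{n_\alpha}{2}$.

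Next, decompose $E_\infty^+(Y)$ by its word-length filtration in the generating cells. The part with $k$ generators contributes classes with labels $\alpha_1, \ldots, \alpha_k$, of total $\bN$-grading $n'' = \sum_i n_{\alpha_i}$ and skeletal filtration $r = \sum_i d_{\alpha_i}$, while the homological degree always satisfies $d'' \geq r$ because all $E_\infty$-operations (including Dyer--Lashof classes) raise homological degree without decreasing the skeletal filtration. The summand with $k = 0$ is just the unit, contributing only $(n'', d'', r) = (0, 0, 0)$. For $k \geq 1$ the cell bound from the preliminary step yields $r = \sum_i d_{\alpha_i} \geq 2k \geq 2$, $d'' \geq r$, and
\[
d'' - 1 \;\geq\; r - 1 \;=\; \sum_i d_{\alpha_i} - 1 \;\geq\; \sum_i \Bigl( \tfrac{n_{\alpha_i}}{2} + 1 \Bigr) - 1 \;=\; \tfrac{n''}{2} + (k - 1) \;\geq\; \tfrac{n''}{2},
\]
establishing the claim on each filtration piece, and hence on $E_\infty^+(Y)$.

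The main obstacle is justifying that the combined bounds $d'' \geq r$ and $d'' - 1 \geq \tfrac{1}{2} n''$ survive the higher $E_\infty$-operations. The cleanest way is to verify this directly for a Dyer--Lashof class $Q^s$: applied to a class of tri-degree $(n_0, d_0, r_0)$ with $d_0 \geq r_0$ and $d_0 - 1 \geq \tfrac{n_0}{2}$, the instability condition forces $s \geq d_0 \geq r_0$, so the output at tri-degree $(2n_0, d_0 + s, 2 r_0)$ satisfies $d_0 + s \geq 2r_0$ and $(d_0 + s) - 1 \geq 2 d_0 - 1 \geq n_0 + 1 \geq \tfrac{2n_0}{2}$. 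Combined with additivity of the three gradings under ordinary products (handled in the second paragraph), and invoking the standard description of free $E_\infty$-algebras on cells from \cite{e2cellsI} to identify the associated graded of the skeletal filtration, this yields the stated vanishing.
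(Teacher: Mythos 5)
Your proof is correct and essentially reproduces the paper's argument: both rest on the identification $E_\infty^+(X) \simeq \bigoplus_{k \geq 0}(X^{\otimes k})_{h\fS_k}$, the observation that each generating cell lies in the region $\{d''=r\geq 2,\ d''-1\geq \tfrac{1}{2}n''\}$, and the fact that this region is closed under addition of tridegrees and upward-closed in homological degree (the latter handling the homotopy-orbits / Dyer--Lashof contributions). The paper phrases this via the abstract region $R$ and its closure properties, whereas you carry out the same bounds by an explicit word-length computation; your final paragraph on Dyer--Lashof operations is harmless but redundant once the $\bigoplus_k (X^{\otimes k})_{h\fS_k}$ decomposition and the homotopy-orbits spectral sequence (which fixes $n''$ and $r$ and only raises $d''$) are invoked.
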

\begin{proof}
The object $X := \bigoplus_{\alpha \in I} S^{n_\alpha, d_\alpha, d_\alpha}$ has homology supported in tridegrees $(n'',d'',r)$ satisfying $d'' = r$, and $n''$ and $d'$ as in Figure \ref{fig:Supp}. In particular, it is supported in the region $R$ of tridegrees $(n'',d'',r)$ satisfying $d'' \geq r \geq 2$ and $d''-1 \geq \tfrac{1}{2}n''$. The region of tridegrees $R$ is closed under addition. As the homology of $X$ is free over $\bk$, by the K{\"u}nneth theorem it follows that the homology of $X^{\otimes k}$ is also supported in the region of tridegrees $R$. The region of tridegrees $R$ is also upwards-closed in the homological degree direction, so it follows from the homotopy orbits spectral sequence that the homology of $(X^{\otimes k})_{h\fS_k}$ is also supported in the region of tridegrees $R$. As $E_\infty^+(X) \simeq \bigoplus_{k \geq 0} (X^{\otimes k})_{h\fS_k}$, it follows that apart from the $k=0$ summand this has homology supported in the region of tridegrees $R$.
\end{proof}

We show that $H_{n,d}(\overline{\gR}/\rho)=0$ for $d < \tfrac{1}{2}(n-1)$ by induction on $d$: it holds for $d=0$, as $H_{*,0}(\overline{\gR}/\rho) = H_{*,0}({\gA}/\rho) = A/\rho$ is supported in gradings 0 and 1 by the explicit presentation of the algebra $A$. Note that $E^1_{n,d,0} = H_{n,d}(\overline{\gR}/\rho)$ and consider differentials
$$d^r : E^r_{n, d-r+1, r} \lra E^r_{n,d,0},$$
with $r >0$. There is a K{\"u}nneth spectral sequence
\begin{equation*}
\bigoplus_{s \geq 0 }\bigoplus_{\substack{n'+n''= n \\ d'+d'' = d+1-s}} \Tor^\bk_s(H_{n', d'}(\overline{\gR}/\rho) , H_{n'', d'', r}(E_\infty^+(\bigoplus_{\alpha \in I} S^{n_\alpha, d_\alpha, d_\alpha}))) \Rightarrow E^1_{n,d-r+1,r}.
\end{equation*}
Using Lemma \ref{lem:VanRange}, as $r >0$ we see that if $H_{n'', d'', r}(E_\infty^+(\bigoplus_{\alpha \in I} S^{n_\alpha, d_\alpha, d_\alpha}))$ is non-trivial then
$$d'- \tfrac{1}{2}(n'-1) = d+1-s-d'' - \tfrac{1}{2}(n-n''-1) = \underbrace{d-\tfrac{1}{2}(n-1)}_{<0} + \underbrace{(\tfrac{1}{2}n''+1-d'')}_{\leq 0}-s,$$
so $d' < \tfrac{1}{2}(n'-1)$. But then as $d' < d$ it follows by induction that $H_{n', d'}(\overline{\gR}/\rho)$ is trivial. Thus $E^1_{n,d-r+1,r}=0$ for $r>0$ when $d < \tfrac{1}{2}(n-1)$, so no non-zero differentials can enter $E^r_{n,d,0}$. As $H_{n,d}(\gA/\rho)=0$ for $d < \tfrac{1}{2}(n-1)$, it follows that $E^1_{n,d,0}=0$.

\section{Application to Dedekind domains: Proof of Theorem \ref{thm:StabDedekindRegularity}}

Theorem \ref{thm:StabDedekindRegularity} will be deduced from Theorem \ref{thm:StabGeneric} by taking the $E_\infty$-algebra $\mathbf{BGL}$, constructed in \cite[Section 18.2]{e2cellsI}, having
$$\mathbf{BGL}(n) = \bigoplus_{\substack{[M] \\ M \text{ f.g.\ projective} \\ \mathrm{rk}(M) = n}} \bk[B\GL(M)].$$
Axiom (I) of Theorem \ref{thm:StabGeneric} is already verified in \cite[Section 18.2]{e2cellsI}: it follows from the connectivity of Charney's ``split Tits building'' \cite[Theorem 1.1]{Charney}. Axiom (II) of Theorem \ref{thm:StabGeneric} holds with $P = \mathrm{Pic}(\cO)$: this is simply a reformulation of the classification of finitely-generated projective $\cO$-modules (which was the inspiration for this axiom). Verifying axioms (III) and (IV) is more involved, and requires preparation.

\subsection{The Tits and split Tits buildings}

\begin{defn}
Let $M$ be a projective $\cO$-module. Let $T(M)$ denote the poset consisting on nonzero proper direct summands $P$ of $M$, ordered by inclusion. Let $\widetilde{T}(M)$ denote the poset consisting of pairs $(P, Q)$ of nonzero submodules of $M$ such that $M = P \oplus Q$, with order relation $(P, Q) \leq (P', Q')$ when $P \leq P'$ and $Q' \leq Q$. 
\end{defn}

Writing $T(\mathrm{frac}(\cO) \otimes_\cO M)$ for the poset of nonzero proper vector subspaces of $\mathrm{frac}(\cO) \otimes_\cO M$, the map
$$\mathrm{frac}(\cO) \otimes_\cO - : T(M) \lra T(\mathrm{frac}(\cO) \otimes_\cO M)$$
is an isomorphism of posets, with inverse given by $V \mapsto V \cap M$. The Solomon--Tits theorem shows that $T(\mathrm{frac}(\cO) \otimes_\cO M)$ is Cohen--Macaulay of dimension $(\mathrm{rk}(M)-2)$, and in particular is homotopy equivalent to a wedge of $(\mathrm{rk}(M)-2)$-spheres. The \emph{Steinberg module} is its unique nontrivial reduced homology group 
$$\mathrm{St}(M) := \tilde{H}_{n-2}(T(M);\bZ).$$

Charney has shown \cite[Theorem 1.1]{Charney} that the same holds for $\widetilde{T}(M)$:

\begin{thm}[Charney]
If $M$ has rank $n$ then $\widetilde{T}(M)$ is homotopy equivalent to a wedge of $(n-2)$-spheres.
\end{thm}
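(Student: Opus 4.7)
The plan is induction on $n = \rk(M)$, strengthening the claim to: $\widetilde{T}(M)$ is \emph{homotopy Cohen--Macaulay} of dimension $n-2$, i.e.\ its order complex is $(n-3)$-connected and for every chain $\sigma$ of $k \geq 1$ elements, $\mathrm{link}(\sigma)$ is $(n-k-3)$-connected (wedge of $(n-k-2)$-spheres). Since $\widetilde{T}(M)$ has pure dimension exactly $n-2$ (maximal chains correspond to complete direct-sum decompositions and have $n-1$ elements), this forces the order complex to be a wedge of $(n-2)$-spheres. Base cases $n\leq 2$ are immediate: for $n=1$ the poset is empty, while for $n=2$ it is a nonempty discrete set (nonempty from $M\cong\cO\oplus L$; discrete because a rank-$1$ summand of $M$ contained in another rank-$1$ summand must coincide with it, as the torsion quotient sits inside the torsion-free complement of the ambient summand).

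For $n\geq 3$, the central combinatorial input is a \emph{link decomposition formula}. Given a chain $\sigma=\big((P_1,Q_1)<\cdots<(P_k,Q_k)\big)$, define gap summands $R_0:=P_1$, $R_i:=P_{i+1}\cap Q_i$ for $1\leq i<k$, and $R_k:=Q_k$. The modular law gives $P_{i+1}=P_i\oplus R_i$ and $Q_i=R_i\oplus Q_{i+1}$, so each $R_i$ is an $\cO$-summand of $M$ with $\sum_{i=0}^{k}\rk(R_i)=n$. Any $(P,Q)\in\mathrm{link}(\sigma)$ must interpolate a unique adjacent gap $(P_i,Q_i)<(P,Q)<(P_{i+1},Q_{i+1})$ (using end conventions $P_0=0$, $Q_0=M$, $P_{k+1}=M$, $Q_{k+1}=0$), and then the pair $(P\cap Q_i,\; Q\cap P_{i+1})$ is a nontrivial splitting of $R_i$, with inverse construction $(A,B)\mapsto (P_i\oplus A,\; Q_{i+1}\oplus B)$. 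Since link elements living in different gaps are automatically comparable, this yields a simplicial isomorphism
$$\mathrm{link}(\sigma)\;\cong\;\widetilde{T}(R_0)*\widetilde{T}(R_1)*\cdots*\widetilde{T}(R_k)$$
(with factors dropped when $\rk(R_i)\leq 1$). By the inductive hypothesis each factor is a wedge of $(\rk(R_i)-2)$-spheres, so the join is a wedge of $(n-k-2)$-spheres, matching the CM link condition.

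The remaining step is to show $\widetilde{T}(M)$ itself is $(n-3)$-connected. For this I would use the projection $\pi:\widetilde{T}(M)\to T(M)$, $(P,Q)\mapsto P$. By Solomon--Tits, $T(M)$ is $(n-3)$-connected. For each $P\in T(M)$, the upper fiber $\pi^{-1}(T(M)_{>P})=\{(P',Q'):P'\supsetneq P,\; M=P'\oplus Q'\}$ admits a natural poset map to $\widetilde{T}(M/P)$ via $(P',Q')\mapsto(P'/P,\,(Q'+P)/P)$ which induces a homotopy equivalence, so by the inductive hypothesis applied to $M/P$ this upper fiber is $(n-\rk(P)-3)$-connected. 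A Quillen--Bj\"orner--Wachs--Welker-type poset fiber theorem, accepting spherical (rather than contractible) fibers and balancing base against fiber dimensions, then upgrades the $(n-3)$-connectivity from $T(M)$ to $\widetilde{T}(M)$.

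The principal obstacle is this global connectivity step. The naive lower fibers $\pi^{-1}(T(M)_{\leq P})$ are \emph{not} contractible (for $P$ of rank $1$ they reduce to the discrete set of complements of $P$ in $M$), ruling out a direct application of Quillen's Theorem A; one must instead work with upper fibers and the corresponding dual fiber lemma, with careful attention to torsor issues in the identification with $\widetilde{T}(M/P)$. The verification of the modular-law identifications in the link formula is routine but bookkeeping-intensive.
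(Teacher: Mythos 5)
The paper does not prove this theorem; it is cited from Charney's original paper. So I will assess your argument on its own merits.

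Your link decomposition is correct and the homotopy-Cohen--Macaulay strengthening is a sensible framework. However, the global connectivity step — which you correctly identify as the hard part — has a genuine gap: the map $\pi^{-1}(T(M)_{>P})\to\widetilde{T}(M/P)$, $(P',Q')\mapsto(P'/P,(Q'+P)/P)$, is \emph{not} a homotopy equivalence. Its fibres over a point $(\bar P,\bar Q)$ are the nonempty discrete sets of complements of $P$ inside the preimage of $\bar Q$, i.e.\ torsors under $\mathrm{Hom}_\cO(Q',P)$, and these can have more than one element. A concrete failure: take $M=\bF_2^3$ and $P$ a line. Then $\pi^{-1}(T(M)_{>P})$ is a discrete poset with $3\times 4=12$ elements (three planes $P'\supset P$, each with four complementary lines $Q'$), while $\widetilde{T}(M/P)$ is a discrete poset with $3\times 2=6$ elements. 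A surjection between discrete sets of different cardinalities is not a homotopy equivalence, so the inductive hypothesis applied to $M/P$ gives no information about the fibre you actually have. The Quillen fibres $\pi^{-1}(T(M)_{\leq\bar P})$ and $\pi^{-1}(T(M)_{\geq\bar P})$ of this comparison map are likewise not contractible, so Quillen's Theorem A does not rescue the identification.

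The correct fibre to feed into a Bj\"orner--Wachs--Welker/Quillen-type theorem with spherical fibres is $\pi^{-1}(T(M)_{\geq P})=\{(P',Q'):P'\supseteq P\}$, \emph{including} $P'=P$; this is (after swapping the two coordinates) precisely the complex Charney denotes $[M\,|\,P]$, and the paper calls $\mathcal{S}^{E_1}(\cdot,P\subseteq\cdot\,|\,M)$. But its sphericity is again a statement about a module of the same rank as $M$ with an auxiliary summand fixed, so it is not covered by an induction on $\rk(M)$ alone. You would need to reformulate the induction to run over $\rk(M)-\rk(P)$ and prove the stronger two-variable statement about $[M\,|\,P]$ directly (the base case $\rk(P)=\rk(M)-1$ gives a nonempty discrete poset, as the paper notes), which changes the shape of the argument. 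As written, the plan is circular: the fibre sphericity needed for the fibre theorem is equivalent in strength to the theorem you are trying to prove.
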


The \emph{split Steinberg module} is its unique nontrivial reduced homology group, 
$$\widetilde{\mathrm{St}}(M) := \tilde{H}_{n-2}(\widetilde{T}(M);\bZ).$$
There is a map of posets $(P, Q) \mapsto Q : \widetilde{T}(M) \to {T}(M)^{op}$. 

\begin{thm}\label{thm:SteinbergEpi}
For $\mathrm{rk}(M) \leq 4$ the induced map $\widetilde{\mathrm{St}}(M) \to {\mathrm{St}}(M)$ is surjective.
\end{thm}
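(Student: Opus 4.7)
The plan is to use the Solomon--Tits apartment description of $\mathrm{St}(M)$. Writing $V := \mathrm{frac}(\cO) \otimes_\cO M$, the isomorphism $T(M) \cong T(V)$ identifies $\mathrm{St}(M)$ with the classical Steinberg module of the vector space $V$, which is generated (with well-understood boundary relations) by apartment classes attached to unordered decompositions $V = V_1 \oplus \cdots \oplus V_n$ into lines. Analogously $\widetilde{\mathrm{St}}(M)$ is generated by \emph{split} apartment classes attached to decompositions $M = L_1 \oplus \cdots \oplus L_n$ into rank $1$ direct summands, and the map in question sends a split apartment to the apartment associated to the frame $V_i := \mathrm{frac}(\cO) \otimes L_i$. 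Hence the task reduces to showing that every frame-apartment class in $\mathrm{St}(M)$ is a combination of \emph{$M$-integral} frame-apartment classes, modulo boundaries; the obstruction is that for an arbitrary frame the saturations $L_i := V_i \cap M$ are rank $1$ summands of $M$, but the sum $L_1 \oplus \cdots \oplus L_n$ typically has nontrivial torsion cokernel inside $M$.

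For $\mathrm{rk}(M) = 2$ the buildings are discrete, $\mathrm{St}(M)$ is generated by differences $[L]-[L']$ of rank $1$ summands, and a lift is supplied by a common direct complement $K$ of $L$ and $L'$ via $\pi_\ast([(L,K)] - [(L',K)]) = \pm([L]-[L'])$. Existence of common complements follows from the fact that the set of complements of a fixed rank $1$ summand is a nonempty torsor under $\mathrm{Hom}_\cO(M/L, L)$, which is ample enough over a Dedekind domain to arrange a common complement for any pair. For $\mathrm{rk}(M) = 3$ and $4$ I would fix a rank $1$ summand $L \subset M$ and a complement $M = L \oplus M'$, and use that the link of $L$ in $T(M)$ is canonically isomorphic to $T(M')$. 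Iterated application of the Solomon--Tits boundary relation rewrites an arbitrary apartment class on $M$ as a combination of apartments all containing the vertex $L$, and thence (via the link) as apartment classes on the rank $n-1$ module $M'$, to which an inductive hypothesis applies.

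The main obstacle, and the reason for the rank restriction, is a \emph{straightening} procedure: one modifies the frame $(V_1,\ldots,V_n)$ one coordinate at a time so that the resulting rank $1$ summands split $M$, where each modification $V_j \leadsto V_j'$ changes the apartment class by a boundary that itself introduces auxiliary apartment classes on sub-quotients of $M$. One must verify that the accumulated auxiliary contributions cancel or can be absorbed into classes already known to lift. For $n \leq 4$ this bookkeeping is tractable by a direct enumeration of the possible ``non-integrality patterns'' parametrised by a small number of elements of $\mathrm{Pic}(\cO)$; in higher ranks it is unclear whether the straightening terminates with only $M$-integral pieces via this direct approach, which is why the theorem is stated only up to rank $4$ (and, happily, this is exactly the range required to verify hypotheses (III) and (IV) of Theorem \ref{thm:StabGeneric}).
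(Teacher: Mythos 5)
Your approach is genuinely different from the paper's. The paper does not work with apartment classes at all: it applies the poset-fiber theorem of \cite[Theorem~4.1]{e2cellsIII} (an extension of Quillen's Theorem~9.1) to the map $f : \widetilde{T}(M) \to T(M)^{op}$, $f(P,Q)=Q$, with height $t(V)=\rk(V)-1$, and reduces the problem to checking that the posets $f_{\leq V} = \{(P,Q) : V \subseteq Q\}$ are appropriately spherical. For $\rk V = \rk M - 1$ this is a discrete set; for $\rk V = 1$ the poset is Charney's complex $[M\,|\,V]$ and Charney's theorem applies directly; and the remaining case ($\rk M = 4$, $\rk V = 2$) is handled by a bespoke $0$-connectedness argument. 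The conclusion (a filtration on $\widetilde{\mathrm{St}}(M)$ with first quotient $\mathrm{St}(M)$) then gives surjectivity. The rank restriction enters because sphericity of the intermediate fibers is only verified up to $\rk M \leq 4$.

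There are two concrete problems with your proposal. First, the rank $2$ step is wrong as stated: a common complement $K$ of $L$ and $L'$ need \emph{not} exist. Any complement of $L$ is isomorphic to $M/L$, and $[L]+[M/L]=[\Lambda^2 M]$ in $\mathrm{Pic}(\cO)$, so if $[L]\neq[L']$ then $M/L\not\cong M/L'$ and no common complement exists. (In the paper's running example, with $M=\cO\oplus\mathfrak{l}$, $L=\cO\oplus 0$, $L'=0\oplus\mathfrak{l}$, every complement of $L$ has class $\lambda$ and every complement of $L'$ has class $\sigma$.) This failure occurs exactly when $\mathrm{Pic}(\cO)\neq 0$, i.e.\ precisely in the case of interest. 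Happily, you do not need a common complement: for $\rk M = 2$ both posets are discrete and $f$ is already surjective on vertices (every rank $1$ summand $Q$ has some complement $P$, giving $(P,Q)\mapsto Q$), so surjectivity of $\widetilde{\mathrm{St}}(M)\to\mathrm{St}(M)$ is immediate. Second, for ranks $3$ and $4$ your argument is not actually carried out: the key ``straightening'' step --- rewriting an arbitrary frame apartment as a combination of $M$-integral ones --- is only described, and you explicitly defer the bookkeeping (``one must verify that the accumulated auxiliary contributions cancel...''). Without that verification, and without a proof that $\widetilde{\mathrm{St}}(M)$ is generated by classes of integral frames in the first place, this is a plan rather than a proof; the actual combinatorial difficulty lives exactly there, and it is the analogue of what the paper's $0$-connectedness lemma for $(\rk M,\rk V)=(4,2)$ is doing.
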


\begin{rem}
We only require this result in this range of ranks, so have not tried hard (i.e.\ have not succeeded) to prove it for general $M$.
\end{rem}

\begin{proof}[Proof of Theorem \ref{thm:SteinbergEpi}]
We apply the slight extension \cite[Theorem 4.1]{e2cellsIII} of Quillen's \cite[Theorem 9.1]{QuillenPoset} to the map of posets $f : \widetilde{T}(M) \to {T}(M)^{op}$ given by $f(P, Q) = Q$, using the height function $t(V) := \mathrm{rk}(V)-1$. We have that ${T}(M)^{op}$ is $(\mathrm{rk}(M)-2)$-spherical, and for each $V \in T(M)$, $[{T}(M)^{op}]_{> V} \cong T(V)^{op}$ is $(t(V)-1)$-spherical.

For each $V \in T(M)$, we have $f_{\leq V} = \{(P,Q) \in \widetilde{T}(M) \, | \, V \subseteq Q\}$. Following \cite{e2cellsIII} we write $\mathcal{S}^{E_1}(\cdot, V \subseteq \cdot \, |\, M)$ for this poset. In order to apply \cite[Theorem 4.1]{e2cellsIII} we require it to be $(\mathrm{rk}(M)- \mathrm{rk}(V)-1)$-spherical. For $\mathrm{rk}(M) \leq 4$ there are three cases.

If $\mathrm{rk}(V) = \mathrm{rk}(M)-1$ then $\mathcal{S}^{E_1}(\cdot, V \subseteq \cdot \, |\, M)$ is the discrete poset consisting of pairs $(L,V)$ with $L \oplus V=M$. In particular it is 0-spherical.

If $\mathrm{rk}(V)=1$ then $\mathcal{S}^{E_1}(\cdot, V \subseteq \cdot \, |\, M)$ agrees with the complex called $[M\,|\,V]$ by Charney \cite{Charney}, so is $(\mathrm{rk}(M)-2)$-spherical by \cite[Theorem 1.1]{Charney}.

As long as $\mathrm{rk}(M) \leq 4$ the only remaining case is $\mathrm{rk}(M)=4$ and $\mathrm{rk}(V)=2$, in which case we require $\mathcal{S}^{E_1}(\cdot, V \subseteq \cdot \, |\, M)$ to be 1-spherical. It is clearly 1-dimensional, so we must show that it is 0-connected. Every element of this poset either has the form $(U, V)$ or the form $(L, W)$ for $W \supseteq V$ and $L$ of rank 1. In the second case, writing $W = L' \oplus V$, we have $(L, W) \leq (L \oplus L', V)$. So it suffices to show that any $(U, V)$ can be connected by a path to some fixed $(U_0, V)$. We will deduce this from the results of Charney used above. Let $L_0 \leq V$ be a rank 1 direct summand, and choose a complement $L_1$ in $V$ so that $U \oplus L_0 \oplus L_1 = M$. There are maps of posets
\begin{equation*}
\begin{tikzcd}
\mathcal{S}^{E_1}(\cdot, L_1 \subseteq \cdot \, |\, U \oplus L_1) \arrow[rr, "{(A,B) \mapsto (A, L_0 \oplus B)}"] \arrow[rrd, "\simeq"] & & \mathcal{S}^{E_1}(\cdot, V \subseteq \cdot \, |\, M) \dar{(P,Q) \mapsto (\tfrac{P\oplus L_0}{L_0}, \tfrac{Q}{L_0})}\\
 & & \mathcal{S}^{E_1}(\cdot, V/L_0 \subseteq \cdot \, |\, M/L_0).
\end{tikzcd}
\end{equation*}
Now $L_1$ has rank 1 so by \cite[Theorem 1.1]{Charney} $\mathcal{S}^{E_1}(\cdot, V/L_0 \subseteq \cdot \, |\, M/L_0)$ is path-connected. Thus we may find a path in it from $(\tfrac{U\oplus L_0}{L_0}, \tfrac{V}{L_0})$ to $(\tfrac{U_0\oplus L_0}{L_0}, \tfrac{V}{L_0})$. Considering this as a path in the isomorphic poset $\mathcal{S}^{E_1}(\cdot, L_1 \subseteq \cdot \, |\, U \oplus L_1)$, it starts at $(U, L_1)$ and ends at some $(U', L_1)$ for which $\tfrac{U'\oplus L_0}{L_0} = \tfrac{U_0\oplus L_0}{L_0}$. Applying the horizontal map, this gives a path from $(U, V)$ to a $(U', V)$ with $U'\oplus L_0 = U_0\oplus L_0$. Now the latter is a rank 3 direct summand of $M$ containing the pair of rank 2 direct summands $U'$ and $U_0$, and so $U' \cap U_0$ is a direct summand (by \cite[Lemma 1.2]{Charney}) of rank 1 or 2. If $U' \cap U_0$ has rank 2 then $U'=U_0$ and so we have found the required path. If $U' \cap U_0$ has rank 1 then
$$(U', V) \geq (U' \cap U_0, X) \leq (U_0, V)$$
for $X$ any choice of complement of the line $U' \cap U_0$ containing $V$. In either case we have found a path from $(U, V)$ to $(U_0, V)$, as required.

Thus \cite[Theorem 4.1]{e2cellsIII} applies. It shows that $\widetilde{\mathrm{St}}(M)$ has a filtration whose first filtration quotient is ${\mathrm{St}}(M)$, so in particular the natural map between them is surjective.
\end{proof}

\subsection{Reductive Borel--Serre spaces}

Before continuing, we introduce the reductive Borel--Serre spaces example. By the main theorem of \cite{Jansen2}, there is an $E_\infty$-algebra in topological spaces whose $\bk$-linearisation $\mathbf{RBS}$ has
$$\mathbf{RBS}(n) \simeq \bigoplus_{\substack{[M] \\ M \text{ f.g.\ projective}, \mathrm{rk}(M) = n}} \bk[|\RBS(M)|].$$
By \cite[Theorem 10.11]{Jansen2} there is an identification
$$H^{E_1}_{n,d}(\mathbf{RBS}) \cong \bigoplus_{\rk(M)=n} \tilde{H}_{d}(\Sigma T(M) /\!\!/ \GL(M);\bk),$$
where $\Sigma$ denotes the unreduced suspension, and $/\!\!/$ the pointed homotopy orbits. By the Solomon--Tits theorem, $\Sigma T(M)$ is homotopy equivalent to a wedge of $(n-1)$-spheres, and yields the Steinberg module $\mathrm{St}(M)$ as its top homology. Thus we can write this as
$$H^{E_1}_{n,d}(\mathbf{RBS}) \cong \bigoplus_{\rk(M)=n} {H}_{d-n+1}(\GL(M);\mathrm{St}(M) \otimes \bk).$$
In particular these groups vanish for $d < n-1$. This verifies axiom (I).

There is a map of $E_\infty$-algebras $\mathbf{BGL} \to \mathbf{RBS}$ (by consulting Jansen's construction), which in each grading is induced by the natural maps $B\GL(M) \to |\RBS(M)|$. As these spaces are both connected, we have
$$A_{\mathrm{Pic}(\cO)} \cong H_{*,0}(\overline{\mathbf{BGL}}) \cong H_{*,0}(\overline{\mathbf{RBS}}),$$
so $\mathbf{RBS}$ also satisfies axiom (II).

\subsection{Verifying  axioms (III) and (IV)}

We will verify these axioms for both  $\mathbf{BGL}$ and $\mathbf{RBS}$. As we have just mentioned, the $0$-truncations of $\overline{\mathbf{BGL}}$ and $\overline{\mathbf{RBS}}$ are the same, namely the discrete $E_\infty$-algebra $\gA$ presented by the quadratic algebra $A := A_{\mathrm{Pic}(\cO)}$. In particular there are $E_\infty$-algebra maps
$$\overline{\mathbf{BGL}} \lra \overline{\mathbf{RBS}} \lra \gA$$
and so induced maps
$$H^{\overline{\mathbf{BGL}}}_{n,d}(\bk) \lra H^{\overline{\mathbf{RBS}}}_{n,d}(\bk) \lra H^{\gA}_{n,d}(\bk).$$
All three groups vanish for $d<n$: the first two by assumption (I) and Lemma \ref{lem:AssIAlt} in these two examples, and the third in fact vanishes for $d \neq n$, by Corollary \ref{cor:AIsKoszul}. 

For $d=n$ these maps takes the form
\begin{equation}\label{eq:CFPMaps}
\bigoplus_{\mathclap{\rk(M)=n}} H_0(\GL(M) ; \widetilde{\mathrm{St}}(M)\otimes \bk) \to \bigoplus_{\mathclap{\rk(M)=n}} H_0(\GL(M) ; {\mathrm{St}}(M)\otimes \bk) \to \Tor^A_{n,n}(\bk, \bk).
\end{equation}

\begin{thm}\label{thm:CFP}
The right-hand map in \eqref{eq:CFPMaps} is surjective.
\end{thm}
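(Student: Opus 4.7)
I would begin by using Corollary \ref{cor:AIsKoszul} to identify
$$\Tor^A_{n,n}(\bk, \bk) \;=\; \bigcap_{i=1}^{n-1} V^{\otimes i-1} \otimes R \otimes V^{\otimes n-i-1} \;\subset\; V^{\otimes n},$$
where $V = \bk\{P\}$ and $R = \ker(V \otimes V \to A_2)$; equivalently these are the tensors whose adjacent multiplications $V \otimes V \to A_2 \cong V$, $\rho \otimes \rho' \mapsto \rho \star \rho'$, all vanish.

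The map in \eqref{eq:CFPMaps} is induced by $\overline{\mathbf{RBS}} \to \gA$ on $E_1$-homology. Via Jansen's description of the source and the bar complex model for $\Tor^A(\bk, \bk)$, it sends a Steinberg cycle $\sum c_F [F] \in \mathrm{St}(M)$ (with $F = (0 \subset P_1 \subset \cdots \subset P_{n-1} \subset M)$ a maximal flag) to $\sum c_F [L_1^F | \cdots | L_n^F] \in V^{\otimes n}$, where $L_i^F = P_i/P_{i-1} \in P$. The image lies in $\Tor^A_{n,n}$ because the face of $F$ removing $P_i$ corresponds, under $P_{i+1}/P_{i-1} \cong \cO \oplus (L_i \star L_{i+1})$ and $A_2 \cong V$, to the $i$-th adjacent product in the bar complex.

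For surjectivity, I would decompose the target by the total-product grading $V^{\otimes n} \to V$, $\vec{\rho} \mapsto \rho_1 \star \cdots \star \rho_n$, into pieces $\Tor^A_{n,n}([L])$ for $[L] \in P$; then treat each $[L]$ separately, fixing $M = \cO^{n-1} \oplus L$. By Solomon--Tits, $\mathrm{St}(M) \cong \mathrm{St}_n(\mathrm{frac}(\cO))$ is spanned by apartment classes coming from $\mathrm{frac}(\cO)$-decompositions $\mathrm{frac}(\cO)^n = W_1 \oplus \cdots \oplus W_n$. The image of such an apartment class is an alternating sum of bar words whose entries record the iso types of lattice-intersection quotients $(W_{\pi(1)} \oplus \cdots \oplus W_{\pi(i)}) \cap M \,/\, (W_{\pi(1)} \oplus \cdots \oplus W_{\pi(i-1)}) \cap M$; since these depend on the $\mathrm{frac}(\cO)$-decomposition rather than only on the iso types of the individual $W_i \cap M$, apartment images realize bar words built from every combination of elements of $P$, not merely antisymmetrizations of fixed tuples.

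The main obstacle is the final combinatorial step: verifying that these apartment images span $\Tor^A_{n,n}([L])$. A clean route, adequate for the rank range $\leq 4$ needed in applications of this theorem, is to factor through the surjection from Theorem \ref{thm:SteinbergEpi} and instead show surjectivity of $\bigoplus H_0(\GL(M); \widetilde{\mathrm{St}}(M)) \to \Tor^A_{n,n}(\bk, \bk)$. The source then has a cleaner description: top cycles in Charney's $\widetilde{T}(M)$ are combinations of \emph{ordered} $\cO$-decompositions $M = L_1 \oplus \cdots \oplus L_n$, each mapping directly to the bar word $[L_1 | \cdots | L_n]$, and surjectivity reduces to an explicit check using Charney's combinatorial description of $\widetilde{T}(M)$.
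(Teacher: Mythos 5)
Your proposal and the paper's proof diverge at the crucial point: the paper does \emph{not} prove surjectivity by a direct combinatorial analysis of $\Tor^A_{n,n}$. Instead, it cites the result of Church, Farb, and Putman (\cite[Proposition 5.5]{CFP}), who had already constructed a surjection $H_0(\GL(M);\mathrm{St}(M)) \to \widetilde{H}_{n-2}(|X_{n-1}(\mathrm{Pic}(\cO))|)$ for every rank-$n$ projective $M$. The entire work of the paper's proof is then to show that the right-hand map in \eqref{eq:CFPMaps} \emph{is} the sum of the CFP maps: this is done by embedding $\gA$ into a slight extension of Clausen--Jansen's ``categories with filtrations'' framework, identifying $\mathscr{M}_{\mathrm{RBS}}(\cA)$ with $\gA$, identifying $\partial\mathrm{RBS}([P])$ with the barycentric subdivision of the CFP poset $X_{\rk(P)-1}(\mathrm{Pic}(\cO))$, and chasing through a comparison diagram. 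Your proposal never identifies the map with the CFP map and never cites CFP's surjectivity.

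Your own route has a genuine gap in exactly the place where you flag ``the main obstacle.'' The assertion that apartment images ``realize bar words built from every combination of elements of $P$'' is not argued, and I see no easy reason why it should be true. Worse, the ``clean route'' you offer as a fallback is logically not a proof of Theorem \ref{thm:CFP}: showing that the composite $\bigoplus H_0(\GL(M);\widetilde{\mathrm{St}}(M)) \to \Tor^A_{n,n}(\bk,\bk)$ is surjective is a strictly weaker statement than surjectivity of the right-hand factor (it is the content of the subsequent corollary, not of the theorem itself), and invoking Theorem \ref{thm:SteinbergEpi} restricts you to $n \leq 4$ whereas Theorem \ref{thm:CFP} is proved in the paper for all $n$. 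Finally, even for that weaker statement, ``surjectivity reduces to an explicit check using Charney's combinatorial description'' is not a proof but a promissory note; carrying it out (spanning $\Tor^A_{n,n}$ by images of top cycles in $\widetilde{T}(M)$) is precisely the nontrivial combinatorics that the paper avoids by appealing to CFP.

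Two smaller points: your identification of $\Tor^A_{n,n}(\bk,\bk)$ with $\bigcap_i V^{\otimes i-1} \otimes R \otimes V^{\otimes n-i-1}$ is the Koszul-dual-coalgebra description and is valid for any quadratic algebra regardless of Koszulness, so citing Corollary \ref{cor:AIsKoszul} for it is unnecessary (Koszulness is only needed to know this is \emph{all} of $\Tor^A_{n,\ast}$). And the claim that the map on $E_1$-homology sends a flag $F$ to the bar word of subquotient classes is plausible, but establishing it rigorously is essentially the diagram chase the paper performs via Jansen's \cite[Theorem 10.10/10.11]{Jansen2} and Clausen--Jansen's \cite[Corollary 5.8]{ClausenJansen}; it cannot simply be asserted.
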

\begin{proof}
Church, Farb, and Putman \cite[\S 5.2]{CFP} have constructed for each finitely-generated projective $\cO$-module $M$ of rank $n$ a certain  map
$$H_0(\GL(M) ;\mathrm{St}(M)) \lra \widetilde{H}_{n-2}(|X_{n-1}(\mathrm{Pic}(\cO))|;\bZ)$$
and have shown \cite[Proposition 5.5]{CFP} that it is surjective. We will explain why the right-hand map above is simply the sum, over isomorphism classes of rank $n$ projective $\cO$-modules, of these maps (tensored with $\bk$): then the claim follows.

To do so it is simplest to put $\gA$ in the framework of ``categories with filtrations'' of \cite[\S 7.1]{ClausenJansen} and rely on the functoriality of that framework. In fact it is awkward to make $\gA$ fit exactly into that setting, so we make the following observation. It is not actually necessary to have an underlying category to make many of the constructions involving ``categories with filtrations'': rather, it suffices to have a collection of objects as well as a class of ``short exact sequences'', satisfying certain properties. Among these properties one must be able to compose admissible monomorphisms, and also compose admissible epimorphisms, but for example one never needs to compose morphisms of the two kinds. Formally, one can view this as as a pair $\mathcal{C} = (\mathcal{C}^{mono}, \mathcal{C}^{epi})$ of categories with the same objects, and with certain pairs of arrows $(a \hookrightarrow b, b \twoheadrightarrow c)$ distinguished as ``short exact sequences'', again satisfying certain properties. This data suffices to define $\mathscr{M}_{RBS}(\mathcal{C})$ as in \cite[Definition 6.1]{Jansen2}.

Now define $\mathcal{A} = (\mathcal{A}^{mono}, \mathcal{A}^{epi})$ to have common set of objects the isomorphism classes of finitely-generated projective $\cO$-modules. Let
$$\mathcal{A}^{mono}([P], [Q]) := \begin{cases}
* & \rk(P) < \rk(Q) \text{ or } [P]=[Q]\\
\emptyset & \text{otherwise}
\end{cases} =: \mathcal{A}^{epi}([Q], [P]),$$
so both categories are posets. The short exact sequences are those pairs $([P] \hookrightarrow [Q], [Q] \twoheadrightarrow [R])$ such that $[Q] = [P \oplus R]$.

The category $\mathcal{P}(\cO)$ of finitely-generated projective $\cO$-modules is a category with filtrations as in \cite[Example 7.3 (3)]{ClausenJansen}, and can be considered in the form $\mathcal{P}(\cO) = (\mathcal{P}(\cO)^{mono}, \mathcal{P}(\cO)^{epi})$ as above. Sending a module to its isomorphism class then gives a morphism
$$\phi : (\mathcal{P}(\cO)^{mono}, \mathcal{P}(\cO)^{epi}) \lra (\mathcal{A}^{mono}, \mathcal{A}^{epi}).$$
This induces a map of $E_1$-algebras
\begin{equation}\label{eq:Trunc}
\mathbf{RBS} := \mathscr{M}_{RBS}(\mathcal{P}(\cO)) \lra \mathscr{M}_{RBS}(\cA).
\end{equation}
We can identify the latter as (see \cite[Definition 6.1, Notation 6.4]{Jansen2})
$$\mathrm{RBS}([P]) \simeq \begin{cases}
\text{Objects are lists } ([Q_1], \ldots, [Q_n]) \text{ of non-zero isomorphism classes}\\
\text{such that } [P] = [\bigoplus_i Q_i].\\
\text{A morphism } ([Q_1], \ldots, [Q_n]) \to ([R_1], \ldots, [R_m]) \text{ is an order preserving } \\ 
\theta: \{1,\ldots,n\} \twoheadrightarrow \{1,\ldots, m\} \text{ such that } [R_j] = [\bigoplus_{i \in \theta^{-1}(j)} Q_i].
\end{cases}$$
Observe that this is a poset, and the one-element list $([P])$ is terminal so $|\mathrm{RBS}([P])|\simeq *$. Thus $\mathscr{M}_{RBS}(\cA)$ has one contractible path component for each isomorphism class of finitely-generated projective $\cO$-module, i.e.\ it is $\gA$. The map \eqref{eq:Trunc} is thus identified with the truncation map $\mathbf{RBS} \to \gA$.

Now $\partial \mathrm{RBS}([P])$ is the full subposet of $\mathrm{RBS}([P])$ on all objects other than $[P]$. We will explain how to identify it with the poset of simplices of the the realisation of the poset $X_{rk(P)-1}(\mathrm{Pic}(\cO))$ defined in \cite[Example 5.3]{CFP}, i.e.\ with the barycentric subdivision $sd X_{rk(P)-1}(\mathrm{Pic}(\cO))$ of this poset.

Recall that the poset $X_{rk(P)-1}(\mathrm{Pic}(\cO))$ is given by $\{1,2,\ldots, rk(P)-1\} \times \mathrm{Pic}(\cO)$, with order relation $(r, [L]) < (r', [L'])$ if and only if $r<r'$. We assign to a list $([Q_1], \ldots, [Q_n])$ in the poset $\mathrm{RBS}([P])$ the chain
$$(\rk(Q_1), [\det(Q_1)]) < (\rk(Q_1 \oplus Q_2), [\det(Q_1 \oplus Q_2)]) < \cdots < (\rk(\oplus_i Q_i), [\det(\oplus_i Q_i)])$$
in $X_{rk(P)-1}(\mathrm{Pic}(\cO))$, i.e.\ an element of $sd X_{rk(P)-1}(\mathrm{Pic}(\cO))$. Conversely, we assign to a chain
$$(r_1, [L_1]) < (r_2, [L_2]) < \ldots < (r_n, [L_n])$$
in $X_{rk(P)-1}(\mathrm{Pic}(\cO))$ the list
$$([\cO^{r_1-1} \oplus L_1], [\cO^{r_2-r_1-1} \oplus (L_2 \otimes L_1^{-1})], \ldots, [\cO^{r_n-r_{n-1}-1} \oplus (L_n \otimes L_{n-1}^{-1})]).$$
This defines a bijection between the poset $\partial \mathrm{RBS}([P])$ and the barycentrically subdivided poset $sd X_{rk(P)-1}(\mathrm{Pic}(\cO))$, and one checks that it preserves the order relation.

We can form the diagram
\begin{equation*}
\begin{tikzcd}
{|\mathcal{F}(P) \setminus \emptyset| /\!\!/ \GL(P)} \rar \dar & {|\mathcal{F}(P)| /\!\!/ \GL(P)} \dar\\
{|\partial\mathrm{RBS}(P)|} \rar \dar& {|\mathrm{RBS}(P)|} \dar\\
{|\partial\mathrm{RBS}([P])|} \rar & {|\mathrm{RBS}([P])|}
\end{tikzcd}
\end{equation*}
where the top square is \cite[Corollary 5.8]{ClausenJansen} and is a homotopy pushout, and the bottom square is induced by $\phi$. As in \cite[Observation 9.15]{Jansen2}, the poset $\mathcal{F}(P) \setminus\emptyset$ is the barycentric subdivision of the Tits poset $\mathcal{T}(P)$, and with the discussion above the composition
$$|\mathcal{F}(P) \setminus \emptyset|  \lra |\mathcal{F}(P) \setminus \emptyset| /\!\!/ \GL(P) \lra |\partial\mathrm{RBS}(P)| \lra |\partial\mathrm{RBS}([P])|$$
is identified with the map induced on barycentric subdivisions by the map $\psi: \mathcal{T}(P) \to X_{rk(P)-1}(\mathrm{Pic}(\cO))$ of \cite[Section 5.2]{CFP}. Using \cite[Theorem 10.10]{Jansen2} we have
\begin{align*}
H^{E_1}_{n,n-1}(\mathbf{RBS}) &= \bigoplus_{[P], \rk(P)=n} H_{n-1}(|\mathrm{RBS}(P)|, |\partial\mathrm{RBS}(P)|) \\
&\cong \bigoplus_{[P], \rk(P)=n} H_0(\GL(P) ; \tilde{H}_{n-1}(\mathcal{T}(P)))\\
H^{E_1}_{n,n-1}(\gA) &= \bigoplus_{[P], \rk(P)=n} H_{n-1}(|\mathrm{RBS}([P])|, |\partial\mathrm{RBS}([P])|)\\
&\cong \bigoplus_{[P], \rk(P)=n} \tilde{H}_{n-1}(X_{rk(P)-1}(\mathrm{Pic}(\cO)))
\end{align*}
and the map between them is that induced by $\psi$, as required.
\end{proof}

\begin{cor}
The composition of the two maps in \eqref{eq:CFPMaps} is surjective for $n \leq 4$.
\end{cor}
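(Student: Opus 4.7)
The plan is to observe that the corollary is an immediate composition of two surjectivity statements that have just been proved. The first map in \eqref{eq:CFPMaps} is induced by the map of $\GL(M)$-modules $\widetilde{\mathrm{St}}(M) \to \mathrm{St}(M)$, summed over isomorphism classes of rank $n$ projective modules. By Theorem \ref{thm:SteinbergEpi} this map of $\bZ[\GL(M)]$-modules is surjective whenever $\rk(M) \leq 4$.

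Since taking $\GL(M)$-coinvariants is right exact and tensoring with $\bk$ over $\bZ$ is also right exact, the induced map on $H_0(\GL(M); - \otimes \bk)$ is still surjective in the same range. Summing over the finitely many isomorphism classes of rank $n$ projective $\cO$-modules (for each fixed $n \leq 4$) preserves this surjectivity, so the left-hand map in \eqref{eq:CFPMaps} is surjective for $n \leq 4$.

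The right-hand map in \eqref{eq:CFPMaps} is surjective in all ranks $n$ by Theorem \ref{thm:CFP}. Composing two surjections yields a surjection, which proves the corollary. There is no real obstacle here: all of the work has already been done in Theorems \ref{thm:SteinbergEpi} and \ref{thm:CFP}, and the only thing to verify is the trivial fact that the functor $\bk \otimes_\bZ H_0(\GL(M); -)$ preserves surjections, which follows from right-exactness.
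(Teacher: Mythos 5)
Your argument is correct and is exactly the paper's: the corollary is obtained by combining Theorem \ref{thm:SteinbergEpi} (surjectivity of $\widetilde{\mathrm{St}}(M)\to\mathrm{St}(M)$ for $\rk(M)\le 4$, which passes through the right-exact functor $H_0(\GL(M);-\otimes\bk)$ and through direct sums) with Theorem \ref{thm:CFP} (surjectivity of the second map), then composing the two surjections.
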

\begin{proof}
Combine Theorems \ref{thm:CFP} and \ref{thm:SteinbergEpi}.
\end{proof}

\begin{cor}
Axioms (III) and (IV) hold for $\mathbf{BGL}$ and for $\mathbf{RBS}$.
\end{cor}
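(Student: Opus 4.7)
The plan is a direct application of Lemma \ref{lem:CheckHyp3and4}. For $\gR \in \{\mathbf{BGL}, \mathbf{RBS}\}$, that lemma reduces axioms (III) and (IV) to showing that the two maps
\begin{equation*}
H^{\overline{\gR}}_{3,3}(\bk) \lra H^{\gA}_{3,3}(\bk) \quad\text{and}\quad H^{\overline{\gR}}_{4,4}(\bk) \lra H^{\gA}_{4,4}(\bk)
\end{equation*}
are surjective in each case. The rest of the proof should be bookkeeping to identify these maps geometrically and then cite the preceding corollary.

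I would use the identification $H^{\overline{\gR}}_{n,d}(\bk) \cong H^{E_1}_{n,d-1}(\gR)$ from the proof of Lemma \ref{lem:AssIAlt}, together with the Koszul identification $H^{\gA}_{n,n}(\bk) \cong \Tor^{A}_{n,n}(\bk,\bk)$ from Corollary \ref{cor:AIsKoszul}, to rewrite each of these maps as a map of $E_1$-homology groups in bidegree $(n,n-1)$ for $n \in \{3,4\}$. For $\mathbf{BGL}$ the source is naturally identified with $\bigoplus_{\rk(M)=n} H_0(\GL(M); \widetilde{\mathrm{St}}(M)\otimes \bk)$, by Charney's connectivity theorem for the split Tits building used to verify axiom (I) in \cite[Section 18.2]{e2cellsI}; for $\mathbf{RBS}$ the source is $\bigoplus_{\rk(M)=n} H_0(\GL(M); \mathrm{St}(M)\otimes \bk)$ by the displayed formula for $H^{E_1}_{*,*}(\mathbf{RBS})$ in the reductive Borel--Serre subsection above. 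Under these identifications, the composition $\overline{\mathbf{BGL}} \to \overline{\mathbf{RBS}} \to \gA$ of $E_\infty$-algebra maps induces precisely the sequence \eqref{eq:CFPMaps} on the relevant $E_1$-homology groups.

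With this in hand, the preceding corollary says exactly that the composition of the two maps in \eqref{eq:CFPMaps} is surjective for $n \leq 4$. This delivers surjectivity of the map $H^{\overline{\mathbf{BGL}}}_{n,n}(\bk) \to H^{\gA}_{n,n}(\bk)$ directly (it \emph{is} that composition), and of $H^{\overline{\mathbf{RBS}}}_{n,n}(\bk) \to H^{\gA}_{n,n}(\bk)$ as a factor of a surjection. Taking $n=3$ and $n=4$ and invoking parts (i) and (ii) of Lemma \ref{lem:CheckHyp3and4} gives axioms (III) and (IV) for both $\mathbf{BGL}$ and $\mathbf{RBS}$. The only mildly technical step is checking the naturality of the identifications so that the maps induced by $\overline{\mathbf{BGL}} \to \overline{\mathbf{RBS}} \to \gA$ on top $E_1$-homology really do coincide with those in \eqref{eq:CFPMaps}; this is implicit in the explicit description of $\mathbf{BGL} \to \mathbf{RBS}$ via Jansen's construction and in the proof of Theorem \ref{thm:CFP}, so it should require only a short verification rather than any new ingredient.
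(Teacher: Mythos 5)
Your proposal is correct and follows the same route the paper takes: apply Lemma~\ref{lem:CheckHyp3and4} and feed in the surjectivity for $n\leq 4$ established by the preceding corollary, with the identifications of \eqref{eq:CFPMaps} already set up by the surrounding discussion and Theorem~\ref{thm:CFP}. The paper's proof is a terse one-liner, and your unpacking of it — noting that for $\mathbf{BGL}$ the relevant map is the full composition in \eqref{eq:CFPMaps} while for $\mathbf{RBS}$ it is the right-hand factor, hence also surjective — is exactly the bookkeeping the paper leaves implicit.
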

\begin{proof}
Apply Lemma \ref{lem:CheckHyp3and4} to the above, to see that 
all of $H_{3,2}^{\overline{\mathbf{BGL}}}(\gA)$, $H_{4,3}^{\overline{\mathbf{BGL}}}(\gA)$, $H_{3,2}^{\overline{\mathbf{RBS}}}(\gA)$, and $H_{4,3}^{\overline{\mathbf{RBS}}}(\gA)$ vanish.
\end{proof}

The axioms having been verified, Theorems \ref{thm:StabDedekind} and \ref{thm:StabDedekindRegularity} follow, as do their analogues for $|\mathrm{RBS}(M)|$.

\section{Proof of Theorem \ref{thm:RationalStab}}

We require the following algebraic lemma, a consequence of a theorem of J{\'o}zefiak and Weyman \cite{JW}.

\begin{lem}\label{lem:Matching}
Let $\bK$ be a field of characteristic zero, $V$ be a $\bK$-module, and consider the cdga
$$C_{\bullet, *} := \left(\mathrm{Sym}^*(V[1,0]) \otimes \Lambda^*(\mathrm{Sym}^2(V)[2,1]), \partial\right)$$
with additional $\bN$-grading denoted $\bullet$, where $\partial$ is determined by the identity map $C_{2,1} = \mathrm{Sym}^2(V) \to C_{2,0} = \mathrm{Sym}^2(V)$ and the graded Leibniz rule. Then
$$H_{n,d}(C_{\bullet, *})=0 \text{ for } d < \tfrac{n-\sqrt{n}}{2}.$$

If $V$ is finite-dimensional then $H_{n,d}(C_{\bullet, *})=0$ for $d < \tfrac{n-\dim(V)}{2}$. If $n$ is a square, $d = \tfrac{n-\sqrt{n}}{2}$, and $\dim(V) \geq n-2d$, then $H_{n,d}(C_{\bullet, *}) \neq 0$.
\end{lem}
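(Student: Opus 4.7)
The plan is to recognise $C_{\bullet,*}$ as a Koszul complex whose homology has been computed by Józefiak--Weyman \cite{JW}, and then to extract the three assertions from their Schur-functor decomposition.

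Concretely, $C_{\bullet,*}$ is the Koszul complex associated to the inclusion $\mathrm{Sym}^2(V) \hookrightarrow \mathrm{Sym}^*(V)$: choosing a basis of $\mathrm{Sym}^2(V)$ realises it as the Koszul complex on those quadratic generators of the polynomial ring $\mathrm{Sym}^*(V)$, with exactly the differential described in the statement of the lemma. Since $\mathrm{char}(\bK)=0$ and the complex is $\mathrm{GL}(V)$-equivariant, the Józefiak--Weyman theorem yields a Schur decomposition
$$H_{n,d}(C_{\bullet,*}) \;\cong\; \bigoplus_{\lambda \in \mathcal{P}_{n,d}} S_\lambda(V),$$
where $\mathcal{P}_{n,d}$ is a combinatorially defined family of partitions of $n$. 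The first step is to unpack this criterion and extract the key inequality: every $\lambda \in \mathcal{P}_{n,d}$ satisfies $d \geq (n - r(\lambda))/2$, where $r(\lambda)$ denotes the side length of the Durfee square of $\lambda$.

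Given this inequality, each part of the lemma follows by elementary partition combinatorics. For the first vanishing statement, $r(\lambda) \leq \sqrt{|\lambda|}$ immediately gives $d \geq (n - \sqrt{n})/2$. For the second (finite-dimensional $V$), the additional constraint that $S_\lambda(V) \neq 0$ forces $\ell(\lambda) \leq \dim V$, and combined with $r(\lambda) \leq \ell(\lambda)$ this upgrades the bound to $d \geq (n - \dim V)/2$. For the sharpness assertion, I would take $\lambda = (k, k, \ldots, k)$, the $k \times k$ square partition: this has $|\lambda| = n = k^2$, $r(\lambda) = \ell(\lambda) = k = \sqrt{n}$, and lies in $\mathcal{P}_{n,(n-\sqrt{n})/2}$; the hypothesis $\dim V \geq k = n - 2d$ ensures $S_\lambda(V) \neq 0$, giving a non-trivial class in the claimed bidegree.

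The substantive step, and the main potential obstacle, is correctly translating the Józefiak--Weyman combinatorial criterion into the Durfee-square inequality stated above; once this bookkeeping is done, all three conclusions reduce to routine inspection of partition shapes.
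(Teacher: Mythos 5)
Your proposal is correct and takes essentially the same route as the paper: both appeal to the J\'{o}zefiak--Weyman Schur-functor decomposition of the homology and deduce all three claims from the Durfee-square combinatorics (the $(n-2d)\times(n-2d)$ square must fit inside $\lambda$, giving the $\sqrt{n}$ bound; the row-count bound $\ell(\lambda)\geq n-2d$ gives the $\dim V$ bound; and the square partition itself witnesses non-vanishing). The only cosmetic difference is that the paper records the J\'{o}zefiak--Weyman criterion as the equality $d(\lambda)=n-2d$ on self-conjugate $\lambda$ of size $n$, whereas you phrase it as the inequality $d\geq(n-r(\lambda))/2$, which is the direction needed for the vanishing statements and which you correctly sharpen to equality for the square partition.
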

\begin{proof}
J{\'o}zefiak and Weyman \cite{JW} (see \cite[Theorem 2.5]{KRW} for the following formulation) identify the homology of $C_{\bullet, *}$ in terms of Schur functors of $V$, as
$$H_{n,d}(C_{\bullet, *}) \cong \bigoplus_{\substack{\lambda \\ |\lambda|=n, \lambda = \lambda^t, d(\lambda) = n-2d}} S_{\lambda}(V)$$
where $d(\lambda)$ denotes the length of the diagonal when $\lambda$ is viewed as a Young diagram. In order for there to be any summands at all, we must have that $n \geq (n-2d)^2$ (as if $\lambda$ has diagonal of length $n-2d$ then it contains the $(n-2d) \times (n-2d)$ square), i.e.\ that $d \geq \tfrac{n-\sqrt{n}}{2}$. Thus it vanishes if $d < \tfrac{n-\sqrt{n}}{2}$. 

If $V$ is finite-dimensional then $S_\lambda(V)=0$ if and only if the number of rows of $\lambda$ is greater than $\dim(V)$. If $d(\lambda)=n-2d$ then it must have at least $n-2d$ rows, so $S_\lambda(V)=0$ when $n-2d > \dim(V)$, i.e.\ when $d < \tfrac{n-\dim(V)}{2}$.

If $n$ is a square and $d = \tfrac{n-\sqrt{n}}{2}$, so $n = (n-2d)^2$, then taking $\lambda$ to be the $(n-2d) \times (n-2d)$ square we see that $S_\lambda(V) \neq 0$ when $\dim(V) \geq n-2d$, which contributes to $H_{n,d}(C_{\bullet, *})$.
\end{proof}

\begin{rem}
Over a field $\bK$ of positive characteristic, choosing a basis $\{b_\alpha\}_{\alpha \in I}$ of $V$, a basis for $C_{\bullet, *}$ may be given by symbols
$$b_{\alpha_1} \cdots b_{\alpha_i} \otimes (b_{\alpha_{i+1}} \cdot b_{\alpha_{i+2}}) \wedge \cdots \wedge (b_{\alpha_{i+2k+1}} \cdot b_{\alpha_{i+2k+2}}).$$
The differential is given by an alternating sum of moving each wedge-summand across the tensor: in particular the multiset of subscripts $\alpha_i$ is unchanged by the differential. Thus $C_{n,*}$ may be described as a direct sum of chain complexes, one for each $n$-element multiset in $I$. If $n \leq |I|$ then there is a summand given by an $n$-element multiset of $I$ with no repeated elements, i.e.\ an injection $\{1,2,\ldots, n\} \to I$. The corresponding direct summand of $C_{n,*}$ may be identified with the 1-fold suspension of the augmented simplicial chain complex of the \emph{matching complex} $M(\{1,2,\ldots, n\})$ on the set $\{1,2,\ldots, n\}$, see e.g.\ \cite{KRW}. So as long as $\dim(V) \geq n$ then
$$\widetilde{H}_{d-1}(M(\{1,2,\ldots, n\}) ; \bK) \subset H_{n,d}(C_{\bullet, *}).$$
Now Bouc \cite[Proposition 7]{Bouc} has shown that
$$\widetilde{H}_{k}(M(\{1,2,\ldots, 3k+4\}) ; \bZ) \cong \bZ/3 \text{ for all } k \geq 1,$$
so $H_{3d+1,d}(C_{\bullet, *};\bK) \neq 0$ if $\bK$ has characteristic 3. Thus Lemma \ref{lem:Matching} cannot hold in positive characteristic (or at least not in characteristic 3).
\end{rem}

\begin{proof}[Proof of Theorem \ref{thm:RationalStab}]
Recall that $\bk$ is a field of characteristic zero. We construct a CW-approximation to $\gR$ by
$$\gC := \gE_\infty(S^{1,0} \otimes \bk\{P\}) \cup^{E_\infty}_{\rho \cdot \rho' - \sigma \cdot (\rho \star \rho')} D^{2,1} \lra \gR,$$
where we attach a $(2,1)$-cell for each pair $(\rho, \rho') \in P^2$: the map to $\gR$ extends over these cells as $[\rho] \cdot [\rho'] = [\sigma] \cdot [\rho \star \rho'] \in H_{2,1}(\gR)$ by assumption (II). The map $H_{*,0}(\overline{\gC}) \to H_{*,0}(\overline{\gR}) = A_P$ is an isomorphism, by construction. As both $\gR$ and $\gC$ satisfy assumption (I), it follows from \cite[Theorem 14.4]{e2cellsI} and the long exact sequence on $E_\infty$-homology for this map that $H_{n,d}^{E_\infty}(\gR, \gC)=0$ for $d < \tfrac{1}{2}n$. We may therefore attach $(n,d)$-cells with $d \geq \tfrac{1}{2}n$ to $\gC$ to obtain a CW-approximation $\gC \to \gC' \overset{\sim}\to \gR$, equipped with a relative skeletal filtration. We will treat stability with respect to $\sigma \in P$, but the argument can be easily adapted to any other $\rho\in P$: the only change is notation.

As, for example, in the proof of \cite[Theorem 18.3]{e2cellsI}, this leads to a spectral sequence
$$E^1_{n,p,q} = H_{n,p+q,q}(\overline{\gC}/\sigma[0] \otimes E_\infty^+(\bigoplus_{\alpha \in I} S^{n_\alpha, d_\alpha, d_\alpha})) \Longrightarrow H_{n,p+q}(\overline{\gR}/\sigma),$$
with $d_\alpha \geq \tfrac{1}{2} n_\alpha$. As the homology of $E_\infty^+(\bigoplus_{\alpha \in I} S^{n_\alpha, d_\alpha, d_\alpha})$ is supported in slopes $\geq \tfrac{1}{2}$, whatever vanishing is enjoyed by $\overline{\gC}/\sigma$ will also hold for $\overline{\gR}/\sigma$, up to slope $\tfrac{1}{2}$.

We now analyse $\overline{\gC}/\sigma$, using Lemma \ref{lem:Matching}. Letting $W:= \bk\{P\}$, the homology of $\overline{\gC}$ is calculated by the bigraded cdga
$$D_{\bullet, *} := (\mathrm{Sym}^*(W[1,0]) \otimes \Lambda^*(W \otimes W [2,1]), \partial)$$
with differential determined by 
$$\rho \otimes \rho' \mapsto \rho \cdot \rho' - \sigma \cdot (\rho \star \rho') : D_{2,1} = W \otimes W \lra D_{2,0} =  \mathrm{Sym}^2(W)$$
and the Leibniz rule. As in the first few lines of the proof of \cite[Theorem 9.5]{e2cellsIV}, as $\bk$ has characteristic zero there is an equivalence $\overline{\gC}/\sigma \simeq \overline{\gC \cup^{E_\infty}_\sigma D^{1,1}}$. Letting $V := \bk\{P \setminus \{\sigma\}\}$, the homology of $\overline{\gC \cup^{E_\infty}_\sigma D^{1,1}}$ may be calculated by the bigraded cdga
$$D'_{\bullet, *} := (\mathrm{Sym}^*(V[1,0]) \otimes \Lambda^*(W \otimes W [2,1]), \partial)$$
with differential determined by
$$\rho \otimes \rho' \mapsto \begin{cases}
\rho \cdot \rho' & \rho \neq \sigma \text{ and } \rho' \neq \sigma\\
0 & \rho=\sigma \text{ or } \rho' = \sigma
\end{cases}$$
and the Leibniz rule. Now in terms of the cdga $C_{\bullet, *}$ introduced in Lemma \ref{lem:Matching} we have $D'_{\bullet, *} = C_{\bullet, *} \otimes \{\substack{\text{a free graded commutative algebra with} \\ \text{generators of slope $\tfrac{1}{2}$ and trivial differential}}\}$, and so
$$H_{*,*}(\overline{\gC}/\sigma) \cong H_{*,*}(C_{\bullet, *})\otimes \{\substack{\text{a free graded commutative algebra} \\ \text{with generators of slope $\tfrac{1}{2}$}}\}.$$
It then follows from Lemma \ref{lem:Matching} that
\begin{enumerate}[(i)]
\item $H_{n,d}(\overline{\gC}/\sigma)=0$ for $d < \tfrac{n-\sqrt{n}}{2}$, and

\item if $P$ is finite then $H_{n,d}(\overline{\gC}/\sigma)=0$ for $d < \tfrac{n-(|P|-1)}{2}$.
\end{enumerate}
With the discussion above, this finishes the argument.
\end{proof}

\begin{rem}\label{rem:sqrtEstimateSharp}
The $E_\infty$-algebra $\gC$ constructed in this argument satisfies assumptions (I) and (II) of Theorem \ref{thm:StabGeneric}, but the argument shows that $H_{*,*}(\overline{\gC}/\sigma)$ contains the homology of the cdga of Lemma \ref{lem:Matching} with $V = \bk\{P \setminus \{\sigma\}\}$. By the last part of that lemma, the homology of this cdga does not vanish in bidegrees $(n,d)$ when $n$ is a square, $d = \tfrac{n-\sqrt{n}}{2}$, and $|P|-1 \geq n-2d$. In particular if $P$ is infinite then $H_{*,*}(\overline{\gC}/\sigma)$ does not admit a vanishing line of slope $\tfrac{1}{2}$.
\end{rem}

\section{An example}\label{sec:Example}
Consider the ring of integers $\cO = \bZ[\sqrt{-5}]$ of the number field $\bQ(\sqrt{-5})$, and write $\omega := \sqrt{-5}$. This field has class number 2, and so $\mathrm{Pic}(\cO) = \{\sigma, \lambda\}$ is the group of order 2: its nontrivial element $\lambda$ may be represented by the (fractional) ideal $\mathfrak{l} = (2, 1+\omega)$.

\vspace{1ex}

\noindent\textbf{Rank 1.} Both $\GL(\cO)$ and $\GL(\mathfrak{l})$ are given by $\cO^\times = \{\pm 1\}$.

\vspace{1ex}

\noindent\textbf{Rank 2.} A presentation for the group $\mathrm{SL}_2(\cO)$ has been given by Swan \cite[Theorem 11.1]{Swan}: it is generated by the matrices
$$
J= \begin{pmatrix}
-1 & 0\\
0 & -1
\end{pmatrix}, T=\begin{pmatrix}
1 & 1\\
0 & 1
\end{pmatrix},U=\begin{pmatrix}
1 & \omega\\
0 & 1
\end{pmatrix}$$
$$A = \begin{pmatrix}
0 & -1\\
1 & 0
\end{pmatrix}, B=\begin{pmatrix}
-\omega & 2\\
2 & \omega
\end{pmatrix}, C=\begin{pmatrix}
-\omega-4 & -2\omega\\
2\omega & \omega-4
\end{pmatrix}$$
subject to the relations
$$J^2 = 1, J \text{ central}, TU=UT, A^2=J, B^2=J, (TA)^3=J, (AB)^2=J$$
$$(AUBU^{-1})^2=J, ACA=JTCT^{-1}, UBU^{-1}CB = JTCT^{-1}.$$
This abelianises to $\bZ/2 \oplus \bZ/6 \oplus \bZ \oplus \bZ$, generated for example by $B, T , U, C$ (\cite[Corollary 11.2]{Swan}). Furthermore, $J \sim 0$ in this group. The matrix $E := \left( \begin{smallmatrix}
-1 & 0\\
0 & 1
\end{smallmatrix} \right)$ acts on $\mathrm{SL}_2(\cO)$ by conjugation, as $B \mapsto JUBU^{-1} \sim B$, $T \mapsto T^{-1} \sim -T$, $U \mapsto U^{-1} \sim -U$, and $C \mapsto TC^{-1}T^{-1} \sim -C$. This shows that
$$\GL(\cO \oplus \cO)^{ab} = (\bZ/2)^5$$
generated by $E, U, T, B, C$ (\cite[Corollary 11.3]{Swan}). 

It remains to describe the abelianisation of $\GL(\cO \oplus \mathfrak{l})$. Using that $\mathfrak{l}^{-1} = \tfrac{1}{2}\mathfrak{l}$, we may describe $\GL(\cO \oplus \mathfrak{l})$ as the subgroup:
$$\left\{\begin{pmatrix}
a & \tfrac{1}{2}b\\
c & d
\end{pmatrix} \in \GL_2(\bQ(\omega)) \, | \, a, d \in \cO, b,c \in \mathfrak{l}\right\} \leq \GL_2(\bQ(\omega)).$$ 
In \cite[Section 2.2]{FGT} we find a presentation for $\mathrm{PSL}(\cO \oplus \mathfrak{l})$ which is easily lifted to the following presentation of $\mathrm{SL}(\cO \oplus \mathfrak{l})$: it is generated by the matrices
$$J= \begin{pmatrix}
-1 & 0\\
0 & -1
\end{pmatrix}, 
A = \begin{pmatrix}
1 & 1\\
0 & 1
\end{pmatrix},
V = \begin{pmatrix}
1 & \tfrac{1+\omega}{2}\\
0 & 1
\end{pmatrix},
C = \begin{pmatrix}
1 & 0\\
2 & 1
\end{pmatrix},
D = \begin{pmatrix}
1 & 0\\
1-\omega & 1
\end{pmatrix}$$
subject to the relations
$$J^2 = 1, J \text{ central}, AV=VA, CD=DC, (AC^{-1})^2 = J, (DV^{-1})^3=1, (CD^{-1}VA^{-1})^3=1.$$
This presentation abelianises to $\bZ/3 \oplus \bZ \oplus \bZ$, generated by $DV^{-1}$, $C$, and $D$. The matrix $E$ above acts by conjugation, as $DV^{-1} \mapsto D^{-1}V \sim -(DV^{-1})$, $C \mapsto C^{-1} \sim -C$, and $D \mapsto D^{-1} \sim -D$. This gives
$$\GL(\cO \oplus \mathfrak{l})^{ab} = (\bZ/2)^3$$
generated by $E$, $C$, and $D$. Furthermore, $J \sim 0$ in this group.

\vspace{1ex}

\noindent\textbf{Rank 3.} By Bass--Milnor--Serre \cite[Corollary 4.3 a)]{BMS} the group $\mathrm{SL}_3(\cO)$ is perfect, so the map $\det : \GL_3(\cO) \to \cO^\times = \{\pm 1\}$ is the abelianisation. As $[\mathfrak{l}] \in \mathrm{Pic}(\cO)$ has order 2, tensoring with it gives an isomorphism
$$\mathfrak{l} \otimes - : \GL(\cO \oplus \cO \oplus \mathfrak{l}) \overset{\sim}\lra \GL(\mathfrak{l} \oplus \mathfrak{l} \oplus \mathfrak{l}^{\otimes 2}) \cong \GL(\cO \oplus \cO \oplus \cO)=\GL_3(\cO),$$
and so $\det : \GL(\cO \oplus \cO \oplus \mathfrak{l}) \to \cO^\times = \{\pm 1\}$ is the abelianisation too.

\bibliographystyle{amsalpha}
\bibliography{MainBib}

\end{document}